\newtheorem{theorem}{Theorem}[section]
\newtheorem{lemma}[theorem]{Lemma}
\newtheorem{corollary}[theorem]{Corollary}
\newtheorem{proposition}[theorem]{Proposition}
\theoremstyle{definition}
\newtheorem{definition}[theorem]{Definition}
\newtheorem{remark}[theorem]{Remark}
\numberwithin{equation}{section}
\def\CC{\mathbb{C}} 
\def\varep{\varepsilon}
\def\psh{\mathrm{PSH}}
\def\capa{\mathrm{Cap}}
\def\reg{\mathrm{reg}}
\def\sing{\mathrm{sing}}
\newcommand{\paren}[1]{\left(#1\right)}
\newcommand{\bparen}[1]{\left[ #1\right]}
\newcommand{\set}[1]{\left\{#1\right\}}
\newcommand{\ol}[1]{\overline{#1}}
\begin{document}
\title[Continuity of solutions to complex Monge-Amp\`{e}re equations]{Continuity of solutions to complex Monge-Amp\`{e}re equations on compact K\"{a}hler spaces}

\author{Ye-Won Luke Cho}
\address{Research Institute of Molecular Alchemy, Gyeongsang National University, Jinju, 52828, Republic of Korea}
\email{ywlcho@gnu.ac.kr}

\author{Young-Jun Choi}
\address{Department of Mathematics and Institute of Mathematical Science, Pusan National University, 2, Busandaehak-ro 63beon-gil, Geumjeong-gu, Busan, 46241, Republic of Korea}
\email{youngjun.choi@pusan.ac.kr}

\subjclass[2020]{14J17, 32Q20, 32U20, 32W20. }
\keywords{complex Monge-Amp\`{e}re equation, compact K\"{a}hler space, pluripotential theory, singular K\"{a}hler-Einstein metric.}

\begin{abstract}
We prove the continuity of bounded solutions to complex Monge-Amp\`{e}re equations on reduced, locally irreducible compact K\"{a}hler spaces. This in particular implies that any singular K\"{a}hler-Einstein potentials constructed in \cite{EGZ09} and \cite{Tsuji88, TianZhang06, ST17} are continuous. We also provide an affirmative answer to a conjecture in \cite{EGZ09} by showing that a resolution of any compact normal K\"{a}hler space satisfies the continuous approximation property. Finally, we settle the continuity of the potentials of the weak K\"{a}hler-Ricci flows \cite{ST17, GLZ20} on compact K\"{a}hler varieties with log terminal singularities.
\end{abstract}

\maketitle

\section{Introduction}
Any complex space in this article will be assumed to be reduced, locally irreducible, and of pure dimension $n\geq1$. We denote by $X^\reg$ the regular locus of a complex space $X$ and by $X^\sing:=X\setminus X^{\textup{reg}}$ the singular locus of $X$.

Recall that each point $p\in X$ admits a local embedding $j_p:V\rightarrow\CC^N$, where $V$ is an open neighborhood of $p$ in $X$ and $N=N(p)\geq 1$ is an integer.

\begin{definition}
\normalfont
Let $U$ be an open subset of $X$. A function $u:U\to \mathbb{R}\cup \{-\infty\}$  satisfying $u\not\equiv -\infty$ on $U$ is $(\textit{strictly})$ $\textit{plurisubharmonic}$ if, for each $p\in U$, there exist a local embedding $j_p:V\to \mathbb{C}^{N}$ and a (strictly) plurisubharmonic function $u'$ on an open neighborhood of $j_p(V)$ in $\mathbb{C}^{N}$ such that $u|_{V}=u'|_{j_p(V)}\circ j_p$. 
\end{definition}

Similarly, the notion of smooth functions and pluriharmonic functions on open sets in $X$ can  be well-defined. The set of (strictly) plurisubharmonic functions on $U$ will be denoted as $(\textup{S})\psh(U)$. 

 If $u\in \psh\cap L_{\textup{loc}}^{\infty}(U)$, then there exists a well-defined Radon measure  $(dd^cu)^n$ on $U^{\textup{reg}}$ associated with $u$ \cite{BedfordandTaylor82}. Furthermore, the integral
\begin{equation*}
 \int_{K}(dd^cu)^n:=\int_{K\cap U^{\textup{reg}}}(dd^cu)^n
\end{equation*}
 turns out to be finite for any compact Borel set $K\subset U$ \cite{Bedford82}. So the measure on $U^{\textup{reg}}$ extends to a Radon measure  on $U$ that vanishes on $U^{\textup{sing}}$. 

\begin{definition}\label{def Kahler form}
Let $\{U_{\alpha}\}$ be an open cover of $X$. A $\textit{K\"{a}hler form}$ on $X$ is a family $\omega=\{\rho_{\alpha}\in \textup{SPSH}\cap C^{\infty}(U_{\alpha})\}$ such that each $\rho_{\alpha}-\rho_{\beta}$ is pluriharmonic on $U_{\alpha}\cap U_{\beta}$. The pair $(X,\omega)$ is called a $\textit{K\"{a}hler space}$.  A function $u:X\to \mathbb{R}\cup \{-\infty\}$  is  $\omega$-$\textit{plurisubharmonic}$ if $\rho_{\alpha}+u$ is plurisubharmonic on $U_{\alpha}$ for each $\alpha$.
\end{definition}

 Note that the set of local forms $\{dd^c(\rho_{\alpha}|_{U^{\textup{reg}}_{\alpha}})\}$ defines a smooth K\"{a}hler form on $X^{\textup{reg}}$. We denote by $\psh(\omega)$ the set of $\omega$-plurisubharmonic functions on $X$. 
 
If $u\in \psh(\omega)\cap L^{\infty}(X)$, then the set of local Radon measures $\{(dd^c(\rho_\alpha+u))^n|_{U_{\alpha}}\}$ defines a Radon measure $(\omega+dd^cu)^n$ on $X$. Then consider the following complex Monge-Amp\`{e}re equation for $u$ on a compact K\"{a}hler space $(X,\omega)$:
\begin{equation}\label{MA equation}
(\omega+dd^cu)^n=f\omega^n,~u\in \psh(\omega)\cap L^{\infty}(X),
\end{equation}
where $f$ is a nonnegative function in $L^p(X,\omega^n)$ for some $p>1$. The terms on both sides of (\ref{MA equation}) are to be interpreted as finite Radon measures on $X$. A singular K\"{a}hler-Einstein potential generating a canonical metric on a compact klt pair, if any exists on the pair, satisfies the equation; see \cite{Yau78, EGZ09, GGZ23}.

\begin{theorem}\label{main theorem}
Let $(X,\omega)$ be a reduced, locally irreducible compact K\"{a}hler space. If $u\in \psh(\omega)\cap L^{\infty}(X)$ is a solution to $\textup{(}\ref{MA equation}\textup{)}$, then $u$ is continuous on $X$.
\end{theorem}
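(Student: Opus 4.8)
\emph{Proof idea.} Since $u\in\psh(\omega)$ it is upper semicontinuous on $X$, so the entire task is to establish lower semicontinuity, i.e. $\liminf_{x\to p}u(x)\ge u(p)$ at every point $p$. The plan is to transfer the problem to a smooth (but degenerate) model: fix a resolution of singularities $\pi:\wt X\to X$ with $\wt X$ a compact K\"ahler manifold, chosen so that $\pi$ is a biholomorphism over $X^{\reg}$ with exceptional locus $E=\pi^{-1}(X^{\sing})$. Put $\theta:=\pi^*\omega$, a smooth closed $(1,1)$-form that is semipositive and big but degenerates along $E$, and $\wt u:=u\circ\pi$, a bounded $\theta$-plurisubharmonic function on $\wt X$ solving $(\theta+dd^c\wt u)^n=\wt f\,\theta^n$ on $\wt X\setminus E$ with $\wt f=f\circ\pi\in L^p(\wt X,\theta^n)$. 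As $\pi$ is a proper surjection it is a closed, hence quotient, map, so $u$ is continuous on $X$ if and only if $\wt u=u\circ\pi$ is continuous on $\wt X$; this is what I would prove.

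On the Zariski-open set $\wt X\setminus E\cong X^{\reg}$ continuity is essentially classical: in a local holomorphic chart, $\rho_\alpha+u$ is a bounded plurisubharmonic solution of $(dd^c(\rho_\alpha+u))^n=g\,dV$ with density $g\in L^p_{\mathrm{loc}}$, $p>1$, and Ko\l odziej's local regularity theory yields continuity there. Hence the only possible failure of continuity of $\wt u$ is along $E$, and since $\wt u$ is already upper semicontinuous it suffices to control it from below as one approaches $E$.

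For this I would set up the pluripotential theory of the degenerate class on $\wt X$: the Monge-Amp\`ere capacity $\capa(\cdot)$ built from $(\theta+dd^cw)^n$ over competitors $w\in\psh(\theta)$, $0\le w\le1$, together with the comparison and domination principles and the quasicontinuity of bounded $\theta$-plurisubharmonic functions. The hypothesis $p>1$ should yield, via H\"older's inequality and a volume--capacity comparison, a domination $\int_A\wt f\,\theta^n\le F(\capa(A))$ by an admissible function of the capacity, while the measure $\wt f\,\theta^n$, like $(\omega+dd^cu)^n$, places no mass on $E$. Feeding these into Ko\l odziej's iteration --- comparing $\wt u$ with a continuous candidate and estimating the sublevel sets $\{\wt u<(\text{approximant})-s\}$ through their capacity, which the measure bound forces to contract --- should upgrade the a priori $L^\infty$ bound to a genuine modulus of continuity up to and across $E$.

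I expect the decisive obstacle to be exactly the passage across $E$, where $\theta$ degenerates and the standard regularization of quasi-plurisubharmonic functions is unavailable. The crucial input is the \emph{continuous approximation property} for the resolution $\wt X$, namely that every bounded $\theta$-plurisubharmonic function is a decreasing limit of continuous $\theta$-plurisubharmonic functions; this legitimizes quasicontinuity, the convergence of Monge-Amp\`ere measures in the big class, and the negligibility of $E$ for $\capa(\cdot)$, and it is precisely what lets the capacity iteration run globally rather than only on $X^{\reg}$. Establishing this property --- which answers the conjecture of \cite{EGZ09} --- is where I anticipate the real work; granting it, the comparison principle gives continuity across $E$, and together with the classical continuity on $\wt X\setminus E$ this proves $\wt u$, and therefore $u$, continuous.
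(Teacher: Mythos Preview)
Your proposal has a genuine circularity. You reduce Theorem~\ref{main theorem} to the continuous approximation property for the pair $(\wt X,\pi^*\omega)$, and you acknowledge that establishing this property is ``where the real work lies.'' But in this paper that property is not an input to Theorem~\ref{main theorem}: it is a \emph{consequence}. Section~\ref{section global approx.} proves the approximation property by taking Demailly's smooth approximants $h_j$, forming the envelopes $P(h_j)$, and then invoking Theorem~\ref{thm CMA semipositive} (which rests on Theorem~\ref{main theorem}) to show each $P(h_j)$ is continuous. So your plan, as written, assumes what is to be proved. Before this paper the approximation property was known only when $V$ is projective and $\omega_V$ is a Hodge form \cite{CGZ13}; you give no indication of how to obtain it for a general compact normal K\"ahler space, and I do not see an independent route.

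The paper avoids this trap by abandoning the global/resolution viewpoint entirely and arguing \emph{locally on $X$ itself}. One assumes $d:=\sup_X(\varphi-\varphi_*)>0$, picks the maximizing point $z_0$, and works in a small strictly pseudoconvex neighborhood $B'\subset\subset B$ with a local potential $v$ for $\omega$ arranged (Lemma~\ref{L:minimum}) to have a strict minimum at $z_0$. Setting $u=v+\varphi$, one takes a decreasing smooth approximation $u_j\searrow u$ on $B$ (Fornaess--Narasimhan), uses Hartogs' lemma on complex spaces (Corollary~\ref{Hartog lemma 2}) to force $u_j<tu+d-a_0$ near $\partial B'$, and then applies the local $L^\infty$ estimate (Theorem~\ref{C^0 estimate}) together with the capacity convergence of Proposition~\ref{decreasing sequence} to derive a contradiction. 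The point is that all of these tools---Bedford's capacity, the comparison principle, the volume--capacity estimate (Lemma~\ref{cap. dom. lemma}), Hartogs' lemma---are available directly on the singular space, so no approximation property on the resolution is needed.
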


The theorem in the case where $(X,\omega)$ is a K\"{a}hler manifold was settled in \cite{Kolo98, EGZ09}. Then the improvements of the methods of \cite{Kolo98} led to the proof of the H\"{o}lder continuity of the solutions on K\"{a}hler manifolds in \cite{Kolo08, DDGHKZ14}. For the case of general compact K\"{a}hler spaces, the optimal global regularity of the solutions is expected to be the H\"{o}lder continuity. The solutions are indeed locally H\"{o}lder continuous on the regular locus of the given space  \cite{DDGHKZ14} and they are further smooth on the locus when $f$ is so \cite{Paun08, EGZ09, BBEGZ19}. But their regularity near the singular locus remains to be studied further. 

 If $X$ is a compact singular projective variety and $\omega$ is a Hodge form on $X$, then Theorem \ref{main theorem} also follows from the global smooth approximation of $u$ \cite{CGZ13} which can be shown to be uniform by the $L^{\infty}$ estimate for (\ref{MA equation}) \cite{EGZ09}. This in particular implies that a singular K\"{a}hler-Einstein potential for a klt pair $(X,D)$ is continuous on $X$ if the $\mathbb{Q}$-Cartier divisor $K_X+D$ is ample or anti-ample, as a version of Kodaira's embedding theorem for compact normal K\"{a}hler spaces is available \cite{EGZ09}. It is also known that solutions to \eqref{MA equation} are continuous near isolated singularities of an arbitrary compact complex space $X$ \cite{GGZ23}.

 In this article, we generalize the methods in \cite{Kolo98} to compact K\"{a}hler spaces. Our proof is of local nature and it is independent of any cohomological property of $\omega$, projectivity of the given space, and type of the singularities of the space. Hence we obtain the following
 \begin{corollary}
 If  a compact klt pair $(X,D)$ admits a singular K\"{a}hler-Einstein metric, then any singular K\"{a}hler-Einstein potential generating the metric is continuous on $X$. 
 \end{corollary}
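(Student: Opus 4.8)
The corollary is a direct application of Theorem \ref{main theorem}, so the plan is to exhibit the Kähler-Einstein potential as a bounded $\omega$-plurisubharmonic solution of \eqref{MA equation} and then quote the theorem. First I would fix the analytic formulation of the potential. Let $\omega$ be a Kähler form on $X$ representing the class $\pm c_1(K_X+D)$ (or a Kähler class when $K_X+D\equiv 0$). By the construction of the metric, a singular Kähler-Einstein potential $u$ is an element of $\psh(\omega)\cap L^\infty(X)$ satisfying
\[
(\omega+dd^cu)^n=e^{\lambda u}\,\mu,
\]
where $\lambda\in\{-1,0,1\}$ is dictated by the sign of $K_X+D$ and $\mu$ is the adapted (klt) measure of the pair $(X,D)$.

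The crux is then to check that the right-hand side is of the form $f\,\omega^n$ with $f\in L^p(X,\omega^n)$ for some $p>1$, since this is the only hypothesis of \eqref{MA equation} beyond $u\in\psh(\omega)\cap L^\infty(X)$. Because $u$ is bounded, $e^{\lambda u}$ is bounded above and below by positive constants, so it suffices to control the density of $\mu$ with respect to $\omega^n$. This is precisely where the klt condition enters: writing $\mu$ locally through the defining sections $s_i$ of the components of $D$ together with the Jacobian factor coming from a log resolution $\pi\colon Y\to X$, the density develops singularities of type $\prod_i|s_i|^{-2a_i}$ and the pullback of the ramification; the klt inequalities, namely that every discrepancy of $(X,D)$ exceeds $-1$ (equivalently $a_i<1$ along $X^\reg$), are equivalent to the $L^p$-integrability of this density for some $p>1$. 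I would either record this estimate as a short lemma or cite the standard computation in \cite{EGZ09, GGZ23}. Nonnegativity of $f$ is automatic since $\mu\geq 0$ and $e^{\lambda u}>0$.

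Finally, I would verify the standing hypotheses of the main theorem. Since $X$ is normal it is reduced and locally irreducible, and it is a compact Kähler space by assumption, so $(X,\omega)$ meets the requirements of Theorem \ref{main theorem}. Applying the theorem to the solution $u$ gives its continuity on all of $X$, which is the assertion. The entire analytic difficulty is carried by Theorem \ref{main theorem}; the only genuine step here is the $L^p$ bound on the adapted measure, and I expect that to be the main (though essentially routine) obstacle, as it is where the geometry of the singularities of $(X,D)$ enters.
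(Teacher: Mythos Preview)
Your proposal is correct and matches the paper's approach: the corollary is stated there without a separate proof, as an immediate consequence of Theorem~\ref{main theorem}, and your write-up simply spells out the standard verification (boundedness of $u$, the $L^p$ integrability of the adapted klt measure from \cite{EGZ09, GGZ23}, and normality giving reduced local irreducibility) that the hypotheses of \eqref{MA equation} are met.
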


For the proof of Theorem \ref{main theorem}, the strict positivity of the base form $\omega$, local $L^{\infty}$ estimate for solutions to (\ref{MA equation}) (Theorem \ref{C^0 estimate}), and Hartogs' lemma on complex spaces (Lemma \ref{Hartogs lemma}) play crucial roles. Then a mild adjustment of the proof also yields the continuity of the singular K\"{a}hler-Einstein potentials on compact minimal projective varieties of general type with log terminal singularities \cite{Tsuji88, TianZhang06, ST17}; see Section \ref{sect. big and nef KX}. In Section \ref{section global approx.}, we show that Theorem \ref{main theorem} together with the method of plurisubharmonic envelopes developed in \cite{GLZ19} implies that a resolution of any compact normal K\"{a}hler space enjoys the global continuous approximation property. This in particular provides an affirmative answer to a conjecture of Eyssidieux-Guedj-Zeriahi (p.614 of \cite{EGZ09}).  Finally, we prove the continuity of the potential of the weak K\"{a}hler-Ricci flow \cite{ST17, GLZ20} on a compact K\"{a}hler variety with log terminal singularities in Section \ref{sect. KRF}.

\subsection*{Acknowledgements}
We would like to thank Kang-Tae Kim for his suggestion that the authors should study the regularity of the singular K\"{a}hler-Einstein potentials. We also thank S\l{}awomir Ko\l{}odziej for his helpful comments on the article and  Tsz On Mario Chan, S\l{}awomir Dinew, Ngoc Cuong Nguyen for the helpful discussions. The first named author would like to thank Kang-Hyurk Lee, Kyeong-Dong Park, and Juncheol Pyo for their support and words of encouragement. The work was supported by  G-LAMP (RS-2023-00301974), the National Research Foundation of Korea (NRF-2021R1A4A1032418, RS-2023-00246259, NRF-2023R1A2C1007227), Samsung Science and Technology Foundation (SSTF-BA2201-01).

\section*{Statements and Declarations}
\subsection*{Conflict of interests} The authors state that there is no conflict of interests.
\subsection*{Data availability statement} This article has not used any associated data.

\section{Pluripotential theory on complex analytic spaces}
\subsection{Local $L^{\infty}$ estimate for complex Monge-Amp\`{e}re operator}
\begin{definition}[\cite{Bedford82}]\label{definition_capacity}
Let $X$ be a complex analytic space and $\Omega$ a nonempty open subset of $X$. 
The \emph{Monge-Amp\`{e}re capacity} of a Borel set $E\subset \Omega$ is 
\begin{equation*}
	\capa(E,\Omega):=\sup
	\set{
		\int_{E}(dd^cu)^n:u\in\psh(\Omega),~-1\leq u\leq 0~\text{on}~\Omega
	}.
\end{equation*}
\end{definition}
 Whenever there is no room for confusion, we shall use the notation  $\capa(E):=\capa(E,\Omega)$. It is straightforward to check that the capacity satisfies the following properties:
\medskip
\begin{enumerate}
	\setlength\itemsep{0.1em}
	\item If $E_1,E_2\subset \Omega$, then
	\[
	\capa(E_1\cup E_2)\leq \capa(E_1)+\capa(E_2).
	\]
	\item If $\{E_j\}$ is an increasing sequence of Borel sets in $\Omega$, then
	\begin{equation}\label{capacity union}
		\capa\paren{\bigcup_{j=1}^{\infty}E_j}=\lim\limits_{j\to \infty}\capa\paren{E_j}.
	\end{equation}
\end{enumerate}
We refer the reader to \cite{Bedford82} for other properties of the capacity.

\medskip
We say that a sequence $\{u_j\}\subset \psh(\Omega)$  \emph{converges to} $u\in \psh(\Omega)$ \emph{with respect to capacity} if
\[
\lim\limits_{j\to \infty}\capa\paren{\set{z\in K:|u_j(z)-u(z)|>\varep}}=0
\]
holds for any $\varep>0$ and a relatively compact subset $K$ of $\Omega$. The following will be important in the proof of Theorem \ref{main theorem}.
\begin{proposition}\label{decreasing sequence}
 If a decreasing sequence $\{u_j\}\subset \psh\cap L^{\infty}(\Omega)$ converges pointwise to $u\in  \psh\cap L^{\infty}(\Omega)$ on $\Omega$, then the sequence $\{u_j\}$ converges to $u$ with respect to capacity.
\end{proposition}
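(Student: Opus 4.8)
The plan is to reduce the statement to a uniform-convergence argument on a compact set obtained after deleting a set of small capacity. Since the sequence is decreasing, $u_j\ge u$ on $\Omega$, so for a fixed $\varep>0$ the relevant sets
\[
E_j:=\set{z\in K:|u_j(z)-u(z)|>\varep}=\set{z\in K:u_j(z)-u(z)>\varep}
\]
form a decreasing sequence of Borel sets. Pointwise convergence forces their intersection to be contained in $\set{u=-\infty}$, hence pluripolar and of zero capacity, so one might hope to pass to the limit directly; however, the capacity is only known to be continuous along \emph{increasing} sequences of sets \eqref{capacity union}, not decreasing ones, so a finer argument is required.

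First I would invoke the quasicontinuity of bounded plurisubharmonic functions with respect to $\capa(\cdot,\Omega)$, a property of the Bedford--Taylor theory \cite{Bedford82, BedfordandTaylor82}: for each $\delta>0$ and each $v\in\psh\cap L^{\infty}(\Omega)$ there is an open set $G\subset\Omega$ with $\capa(G,\Omega)<\delta$ such that $v|_{\Omega\setminus G}$ is continuous. Applying this to each $u_j$ with threshold $\delta/2^{\,j+1}$ and to $u$ with threshold $\delta/2$, and enlarging $G$ by the capacity-zero pluripolar sets $\set{u_j=-\infty}$ and $\set{u=-\infty}$, I obtain a single open set $G$ with $\capa(G,\Omega)<\delta$ off which every $u_j$ and $u$ is finite and continuous. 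Here the passage from finite to countable subadditivity is legitimate: combining the finite subadditivity property with \eqref{capacity union} yields $\capa\paren{\bigcup_j G_j}\le\sum_j\capa(G_j)$.

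On the compact set $K\setminus G$ the functions $u_j$ are continuous and decrease pointwise to the continuous function $u$, so Dini's theorem forces the convergence there to be uniform. Hence there is $N$ with $u_j-u<\varep$ on $K\setminus G$ for all $j\ge N$, which gives $E_j\subset G$ and therefore $\capa(E_j)\le\capa(G)<\delta$ for all $j\ge N$. Since $\delta>0$ was arbitrary, $\capa(E_j)\to 0$, i.e. $\{u_j\}$ converges to $u$ with respect to capacity.

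The main obstacle, and the place where the plurisubharmonic structure is genuinely used, is the quasicontinuity statement; once it is in hand the argument is elementary. In the manifold case quasicontinuity is classical, but on a reduced, locally irreducible complex space one must justify it intrinsically---either by transporting the local Bedford--Taylor estimates through the local embeddings $j_p:V\to\CC^{N}$ and comparing the intrinsic capacity with its ambient counterpart, or by appealing to the version of the theory developed directly on complex spaces. I would also take care that the exceptional set $G$ be selected once for the entire countable family $\set{u_j}\cup\set{u}$, which is precisely what countable subadditivity secures; choosing it separately for each index would not permit the simultaneous application of Dini's theorem.
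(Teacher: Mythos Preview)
Your argument is correct; once quasicontinuity of bounded plurisubharmonic functions is granted on the complex space $\Omega$ (and it is indeed available from \cite{Bedford82}), the Dini--subadditivity scheme goes through exactly as you describe. One tiny redundancy: since every $u_j$ and $u$ lies in $L^\infty(\Omega)$, the sets $\{u_j=-\infty\}$ and $\{u=-\infty\}$ are empty, so the enlargement step is vacuous.

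The paper takes a genuinely different route. Rather than invoking quasicontinuity on the singular space directly, it decomposes $K$ into a piece near the singular locus and a piece in the regular locus. By a result of Bedford (Equation~(5) in \cite{Bedford82}), the compact set $K\cap X^{\sing}$ admits an open neighborhood $U$ with $\capa(U)<\varep'/2$; the remaining piece $K\setminus U$ sits inside the manifold $X^{\reg}$, where the convergence-in-capacity statement is already known (Proposition~1.12 in \cite{Kolo05}), so $\capa(K_j\setminus U)<\varep'/2$ for large $j$, and subadditivity finishes. Your approach is the intrinsic one---it reproves the manifold result and the singular extension in one stroke, at the cost of needing quasicontinuity on $\Omega$ itself. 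The paper's approach is modular: it isolates the singularities by a capacity-small neighborhood and then quotes the smooth theory as a black box, which keeps the proof short and the dependencies explicit.
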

\begin{proof}
Let $K$ be a relatively compact subset in $\Omega$ and $\varep, \varep'$ positive numbers.
For each $j\geq 1$, define 
\[
K_{j}:=\{z\in K:|u_j(z)-u(z)|>\varep\}.
\] 
By Equation (5) in \cite{Bedford82}, there exists an open neighborhood $U\subset \Omega$ of the compact set $K\cap X^{\textup{sing}}$ such that $\capa(U)<\varep'/2$. Applying Proposition 1.12 in \cite{Kolo05} to finitely many relatively compact subsets of the compact set $K-U\subset X^{\textup{reg}}$, one can also find a positive integer $j_0$ such that
\[
\capa(K_j-U)\leq \frac{\varep'}{2}
\]
whenever $j\geq j_0$. This implies that
\[
\capa(K_j)\leq \capa(U)+\capa(K_j-U)<\varep'
\]
if $j\geq j_0$.
\end{proof}
A domain $\Omega$ in a complex space $X$ is $\textit{strictly pseudoconvex}$ if it admits a negative $C^{2}$ strictly plurisubharmonic exhaustion function.  In the rest of the subsection, we shall assume that $(X,\omega)$ is a K\"{a}hler space and $\Omega$ is a relatively compact strictly pseudoconvex domain in $X$.
\begin{theorem}\label{C^0 estimate}
   Suppose that $u\in  \psh(\Omega)\cap L^{\infty}(\overline{\Omega})$ and $v\in  \psh(\Omega)\cap C^{0}(\overline{\Omega})$ satisfy $\liminf_{\Omega \ni \zeta \to z}\,(u(\zeta)-v(\zeta))\geq 0~\text{for each}~z\in \partial \Omega$. If
\begin{equation}\label{density assumption}
(dd^cu)^n= f\omega^n~\text{for some}~ f\in L^p(\Omega,\omega^n),\,p>1
\end{equation}
 holds on $\Omega$, then there exists a uniform constant $A=A(\|f\|_{L^p})>0$ such that
 \begin{equation}\label{sup estimate}
  \sup_{\Omega}\paren{v-u}\leq \varep+A\cdot\bparen{\capa(\{v-u>\varep\})}^{\frac{1}{n}}
 \end{equation}
for any $\varep>0$.
\end{theorem}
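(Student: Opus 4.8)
The plan is to transplant Kołodziej's capacity method to the singular setting and reduce the estimate to a one-variable iteration. Write $\phi = v - u$ and $M = \sup_\Omega \phi$, and fix $\varepsilon > 0$. If $M \le \varepsilon$ there is nothing to prove, so assume $M > \varepsilon$. For $s > 0$ set
\[
U(s) := \{ z \in \Omega : v(z) - u(z) > s \}.
\]
Since $v$ is continuous and $u$ is upper semicontinuous, each $U(s)$ is open; the boundary hypothesis $\liminf_{\Omega \ni \zeta \to z}(u - v) \ge 0$ forces $v - u \le 0$ near $\partial\Omega$, so $U(s) \Subset \Omega$ for every $s > 0$, and $U(s) \ne \emptyset$ precisely when $s < M$.

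The first step is the comparison inequality: for all $0 < s < t$,
\[
(t-s)^n\,\capa(U(t)) \le \int_{U(s)} (dd^c u)^n .
\]
To prove it, fix $w \in \psh(\Omega)$ with $-1 \le w \le 0$, put $\delta = t - s$ and $\psi = v + \delta w - s \in \psh(\Omega)\cap L^\infty$. Near $\partial\Omega$ one has $u - \psi \ge (u - v) + s$, so $\liminf(u-\psi) \ge s > 0$ and $\{u < \psi\} \Subset \Omega$; hence the comparison principle for bounded plurisubharmonic functions---valid on $\Omega$ because $X^{\sing}$ is pluripolar and carries no Monge-Amp\`ere mass---gives $\int_{\{u<\psi\}}(dd^c\psi)^n \le \int_{\{u<\psi\}}(dd^c u)^n$. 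Since $-1 \le w \le 0$ we have the inclusions $U(t) \subset \{u<\psi\} \subset U(s)$, and the monotonicity of the Monge-Amp\`ere operator yields $(dd^c\psi)^n \ge \delta^n (dd^c w)^n$; combining these gives $\delta^n\int_{U(t)}(dd^c w)^n \le \int_{U(s)}(dd^c u)^n$, and taking the supremum over all admissible $w$ proves the claim. I emphasize that this holds for arbitrary step sizes $t - s$, which will matter below.

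The second step is to dominate the right-hand side by capacity. Because $(dd^c u)^n = f\omega^n$ with $f \in L^p(\Omega,\omega^n)$, $p > 1$, H\"older's inequality gives $\int_E f\omega^n \le \|f\|_{L^p}\,\vol(E)^{(p-1)/p}$, and feeding this into the volume-capacity comparison on the relatively compact strictly pseudoconvex domain $\Omega$ (transported to $X$ through local embeddings, with the neighborhood of $X^{\sing}$ controlled as in Proposition \ref{decreasing sequence}) produces a constant $A_0 = A_0(\|f\|_{L^p}) > 0$ such that
\[
\int_E (dd^c u)^n \le A_0\,\capa(E)^2 \qquad \text{for every Borel } E \Subset \Omega .
\]
Writing $h(s) := \capa(U(s))^{1/n}$ and inserting this into the comparison inequality turns the two displays into the single recursion $\eta\,h(s+\eta) \le A_0^{1/n}\,h(s)^2$ for all $s \ge \varepsilon$ and $\eta > 0$.

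Finally, a telescoping argument closes the estimate: using that $h$ is decreasing and right-continuous, one builds $\varepsilon = s_0 < s_1 < \cdots$ along which $h$ is halved, and the recursion forces $s_{k+1}-s_k \le 2 A_0^{1/n} 2^{-k} h(\varepsilon)$, so that $s_\infty := \lim_k s_k \le \varepsilon + 4 A_0^{1/n}\,\capa(U(\varepsilon))^{1/n}$ and $h(s_\infty) = 0$. Since a nonempty open subset of $\Omega$ has strictly positive capacity, $\capa(U(s)) = 0$ forces $U(s) = \emptyset$, i.e. $s \ge M$; therefore
\[
M \le \varepsilon + A\,\capa(U(\varepsilon))^{1/n}, \qquad A := 4 A_0^{1/n},
\]
which is exactly (\ref{sup estimate}). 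I expect the main obstacle to be the measure-capacity domination of the second step: establishing it with a uniform constant on the singular space requires the volume-capacity comparison to be set up through the local embeddings while the contribution of a neighborhood of $X^{\sing}$ is absorbed using its pluripolarity, and one must simultaneously confirm that the comparison principle of the first step survives across $X^{\sing}$---both points hinging on the vanishing of all the relevant measures on the singular locus.
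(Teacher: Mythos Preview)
Your proposal is correct and follows essentially the same route as the paper: the comparison inequality is the paper's Lemma~\ref{capacity lemma}, the measure--capacity domination is Lemma~\ref{cap. dom. lemma} (which the paper obtains by citing the volume--capacity estimate of \cite{GGZ23} rather than patching through local embeddings), and the telescoping is Lemma~\ref{incr ftn lemma}. The only cosmetic differences are that the paper passes from $\capa=0$ to $\sup_\Omega(v-u)\le s_\infty+\varepsilon$ via ``zero volume $\Rightarrow$ a.e.\ $\Rightarrow$ everywhere by semicontinuity'' instead of your ``nonempty open sets have positive capacity'', and it splits the final bound into two cases depending on the size of $g(0)$, whereas your iteration started at $s_0=\varepsilon$ avoids this.
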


The proof of the theorem follows the arguments in \cite{GKZ08}. First, recall the following comparison principle on complex analytic spaces.

\begin{theorem}[\cite{Bedford82}]\label{comparison principle th}
If $u,v\in \psh\cap L^{\infty}(\Omega)$ satisfy
\[
\liminf_{\Omega \ni \zeta \to z}\,(u(\zeta)-v(\zeta))\geq 0~\text{for each}~z\in \partial \Omega,
\]
 then
	\[
	\int_{\{u<v\}}(dd^cv)^n\leq \int_{\{u<v\}}(dd^cu)^n.
	\]
\end{theorem}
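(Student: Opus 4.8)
The plan is to follow \cite{GKZ08}: first extract from the comparison principle (Theorem \ref{comparison principle th}) a quantitative bound on the capacity of the super-level sets $\{v-u>s\}$ in terms of the Monge-Amp\`ere mass of $u$ over slightly lower super-level sets, then upgrade it using the $L^p$-hypothesis on $f$, and finally run a Ko\l{}odziej iteration. Throughout set $\phi:=v-u$ and $M:=\sup_{\Omega}\phi$; I may assume $M>\varep$, since otherwise the right-hand side of \eqref{sup estimate} already dominates $M$. For the first step, fix $s,t>0$ and let $w\in\psh(\Omega)$ with $0\le w\le 1$, and put $\rho:=v+t(w-1)-s\in\psh\cap L^{\infty}(\Omega)$. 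Since $0\le w\le 1$, one checks the inclusions $\{\phi>s+t\}\subset\{u<\rho\}\subset\{\phi>s\}$, and on $\partial\Omega$ one has $\liminf_{\zeta\to z}(u(\zeta)-\rho(\zeta))\ge\liminf_{\zeta\to z}(u(\zeta)-v(\zeta))+s\ge s>0$, so Theorem \ref{comparison principle th} applies to the pair $u,\rho$. Using $(dd^c\rho)^n=(dd^c(v+tw))^n\ge t^n(dd^cw)^n$ together with the two inclusions gives
\[
t^n\int_{\{\phi>s+t\}}(dd^cw)^n\le\int_{\{u<\rho\}}(dd^c\rho)^n\le\int_{\{u<\rho\}}(dd^cu)^n\le\int_{\{\phi>s\}}f\omega^n,
\]
and taking the supremum over all admissible $w$ yields the key inequality $t^n\,\capa(\{\phi>s+t\})\le\int_{\{\phi>s\}}f\omega^n$ for all $s,t>0$.

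\emph{$L^p$ upgrade.} By H\"older's inequality with $1/p+1/q=1$ and the volume--capacity comparison on the strictly pseudoconvex domain $\Omega$ (which forces $\int_E\omega^n$ to decay super-polynomially in $\capa(E,\Omega)$), the hypothesis $f\in L^p(\Omega,\omega^n)$ provides a constant $A_0=A_0(\|f\|_{L^p})$ such that $\int_E f\omega^n\le A_0\,\capa(E,\Omega)^2$ for every Borel $E\subset\Omega$. Writing $a(s):=\capa(\{\phi>s\})$, the key inequality becomes $t^n a(s+t)\le A_0\,a(s)^2$; extracting $n$-th roots and setting $b(s):=a(s)^{1/n}$ this reads $t\,b(s+t)\le A_1\,b(s)^2$ with $A_1:=A_0^{1/n}$.

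\emph{Iteration and conclusion.} Starting from $s_0:=\varep$, I define $s_{k+1}:=s_k+2A_1 b(s_k)$. The inequality just obtained gives $b(s_{k+1})\le\tfrac12 b(s_k)$, hence $b(s_k)\le 2^{-k}b(\varep)$ and $\sum_k(s_{k+1}-s_k)\le 4A_1 b(\varep)$. Consequently $a(s)=0$ for every $s>\varep+4A_1 b(\varep)=\varep+4A_1\,\capa(\{\phi>\varep\})^{1/n}$. Now $\phi=v-u$ is lower semicontinuous (as $v$ is continuous and $u$ is upper semicontinuous), so each nonempty set $\{\phi>s\}$ is open, and since $X^{\reg}$ is dense it then carries positive $\omega^n$-mass; as capacity-zero sets are $\omega^n$-null, $\{\phi>s\}$ must be empty for $s>\varep+4A_1\,\capa(\{\phi>\varep\})^{1/n}$. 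Therefore $\sup_{\Omega}(v-u)\le\varep+A\,\capa(\{v-u>\varep\})^{1/n}$ with $A:=4A_1=4A_0^{1/n}$, which is precisely \eqref{sup estimate}.

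The step I expect to be the main obstacle is the $L^p$ upgrade on a possibly singular $\Omega$: the classical flat volume--capacity estimate has to be transferred through the local embeddings $j_p\colon V\to\CC^N$ and shown to hold uniformly up to $X^{\sing}$, where both $\omega^n$ and $\capa(\cdot,\Omega)$ are a priori only controlled on $X^{\reg}$. The supporting density and lower-semicontinuity argument that closes the iteration must likewise be verified not to be spoiled by the singular locus, although here the fact that $\omega^n$ vanishes on $X^{\sing}$ while $X^{\reg}$ is dense should suffice.
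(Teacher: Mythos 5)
Your proposal does not address the statement in question. The statement to be proved is the comparison principle itself: under the boundary condition, $\int_{\{u<v\}}(dd^cv)^n\leq\int_{\{u<v\}}(dd^cu)^n$. What you actually prove is the capacity--sup estimate \eqref{sup estimate}, i.e.\ the conclusion of Theorem \ref{C^0 estimate}, and your argument explicitly \emph{invokes} Theorem \ref{comparison principle th} (``so Theorem \ref{comparison principle th} applies to the pair $u,\rho$''). As a proof of the comparison principle this is circular: you assume the very inequality you were asked to establish. Taken on its own terms, your outline is essentially the paper's proof of Theorem \ref{C^0 estimate} --- it reproduces Lemma \ref{capacity lemma} (the substitution $\rho=v+t(w-1)-s$), Lemma \ref{cap. dom. lemma} (the $L^p$/volume--capacity upgrade via Lemma 1.9 of \cite{GGZ23}), and the Ko\l{}odziej iteration of Lemma \ref{incr ftn lemma} --- but that is a downstream consequence of the comparison principle, not the principle itself.

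For the record, the paper does not prove Theorem \ref{comparison principle th} either; it is quoted from \cite{Bedford82}. A genuine proof would have to work at the level of the Monge--Amp\`ere operator on the complex space $\Omega$: the measures $(dd^cu)^n$ and $(dd^cv)^n$ of bounded plurisubharmonic functions are by construction supported on $\Omega^\reg$ (they put no mass on $\Omega^\sing$), so one reduces to the Bedford--Taylor comparison argument on the manifold $\Omega^\reg$ --- replacing $v$ by $\max(u+\delta,v)$, integrating by parts against quasi-continuous approximants, and letting $\delta\to0$ --- while checking that the boundary condition and the mass bookkeeping survive the removal of the pluripolar set $\Omega^\sing$ and the possible non-compactness of $\overline{\{u<v\}}\cap\Omega^\reg$ inside $\Omega^\reg$. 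None of this appears in your proposal, so as a proof of the stated theorem it has a complete gap rather than a fixable flaw.
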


The following three lemmas are crucial in the proof of Theorem \ref{C^0 estimate}.
\begin{lemma}\label{capacity lemma}
If $u,v\in \psh\cap L^{\infty}(\Omega)$ satisfy
\[
\liminf_{\Omega \ni \zeta \to z}\,(u(\zeta)-v(\zeta))\geq 0~\text{for each}~z\in \partial \Omega,
\]
 then for any $s>0$ and $t\geq 0$, we have
\[
t^n\capa(\{u-v<-s-t\})\leq \int_{\{u-v<-s\}}(dd^cu)^n.
\]
\end{lemma}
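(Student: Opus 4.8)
The plan is to unravel the definition of the Monge-Amp\`ere capacity and reduce the claim to a pointwise comparison of measures. Writing $E:=\{u-v<-s-t\}=\{u<v-s-t\}$, by Definition \ref{definition_capacity} it suffices to prove that
\[
t^n\int_{E}(dd^cw)^n\leq \int_{\{u-v<-s\}}(dd^cu)^n
\]
for every test function $w\in\psh(\Omega)$ with $-1\leq w\leq 0$, and then take the supremum over all such $w$. The case $t=0$ being trivial, I would assume $t>0$ and introduce the auxiliary functions $u+s$ and $g:=v+tw$, both of which lie in $\psh\cap L^{\infty}(\Omega)$ since $t\geq 0$ and $w$ is bounded plurisubharmonic.

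The heart of the argument is a chain of set inclusions produced by the bounds $-t\leq tw\leq 0$. First I would observe that on $E$ one has $u<v-s-t=(v-t)-s\leq(v+tw)-s$, so $E\subset\{u<g-s\}=\{u+s<g\}$. Second, since $tw\leq 0$ gives $g-s\leq v-s$, I would note the reverse-direction inclusion $\{u+s<g\}\subset\{u<v-s\}$. Applying the comparison principle (Theorem \ref{comparison principle th}) to the pair $u+s$ and $g$ on the set $\{u+s<g\}$ yields
\[
\int_{\{u+s<g\}}(dd^cg)^n\leq\int_{\{u+s<g\}}(dd^cu)^n.
\]
I would then bound the left-hand side from below using the binomial expansion $(dd^cv+t\,dd^cw)^n\geq t^n(dd^cw)^n$ as positive measures, together with $E\subset\{u+s<g\}$, and bound the right-hand side from above by enlarging the domain of integration via $\{u+s<g\}\subset\{u<v-s\}$. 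Combining these gives the desired inequality for each $w$, and the supremum over $w$ completes the proof.

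The step I expect to require the most care is checking that the perturbation preserves the hypothesis of the comparison principle and that the measure inequality is legitimate on the singular space. For the boundary condition, I would verify that $\liminf_{\Omega\ni\zeta\to z}\bigl((u+s)(\zeta)-g(\zeta)\bigr)=\liminf\bigl((u-v)(\zeta)-tw(\zeta)\bigr)+s\geq s>0$ for each $z\in\partial\Omega$, using $-tw\geq 0$ and the assumed boundary inequality for $u-v$; this is exactly what licenses Theorem \ref{comparison principle th}. For the inequality $(dd^cg)^n\geq t^n(dd^cw)^n$, since all Monge-Amp\`ere measures in play vanish on $\Omega^{\sing}$ and are defined through the local Bedford-Taylor operator on $\Omega^{\reg}$, the nonnegativity of the omitted mixed terms in the expansion holds on the regular locus and hence as measures on $\Omega$. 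Once these two points are confirmed, the set inclusions and the supremum over $w$ deliver $t^n\capa(\{u-v<-s-t\})\leq\int_{\{u-v<-s\}}(dd^cu)^n$.
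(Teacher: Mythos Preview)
Your proposal is correct and follows essentially the same approach as the paper: the paper fixes a competitor $w$, records the chain of inclusions $\{u-v<-s-t\}\subset\{u<v-s+tw\}\subset\{u-v<-s\}$, uses the measure inequality $(dd^c(v+tw))^n\geq t^n(dd^cw)^n$, and applies the comparison principle to $u$ and $v-s+tw$---which is exactly your comparison of $u+s$ with $g=v+tw$ up to an additive constant. Your explicit verification of the boundary hypothesis for the comparison principle is a point the paper leaves implicit (it only remarks that the sets are relatively compact), so if anything your write-up is slightly more careful there.
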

\begin{proof}
Choose a function $w\in \psh(\Omega)$ satisfying $-1\leq w\leq 0$ on $\Omega$. Then 
\begin{equation}\label{setinclusions}
\{u-v<-s-t\}\subset \{u<v-s+tw\}\subset \{u<v-s\}\subset \Omega.
\end{equation}
Note that each set above is relatively compact in $\Omega$. Since $(dd^c(u+v))^n\geq (dd^cu)^n$ holds for any $u,v\in \psh\cap L^{\infty}(\Omega)$ (see \cite{Klimek91}), we obtain
\begin{align*}
I(w)&:=t^n\int_{\{u-v<-s-t\}}(dd^cw)^n=\int_{\{u-v<-s-t\}}(dd^c(tw))^n\\
&\leq\int_{\{u<v-s-t\}}(dd^c(v-s+tw))^n\leq\int_{\{u<v-s+tw\}}(dd^c(v-s+tw))^n.
\end{align*}
So Theorem \ref{comparison principle th} and  (\ref{setinclusions}) imply
\begin{equation*}\label{integralestimate}
I(w)\leq \int_{\{u<v-s+tw\}}(dd^cu)^n\leq \int_{\{u<v-s\}}(dd^cu)^n.
\end{equation*}
This completes the proof.
\end{proof}

\begin{lemma}\label{cap. dom. lemma}
Fix $p>1$ and a nonnegative function $f \in L^p(\Omega,\omega^n)$. Then there exists a constant $D=D(\|f\|_{L^p})>0$ such that
	\begin{equation}\label{cap. domination ineq.}
		\int_{K}f\omega^n\leq D\cdot [\capa(K)]^{2}
	\end{equation}
	for any relatively compact Borel set $K\subset \Omega$.
\end{lemma}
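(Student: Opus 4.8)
The plan is to bound the measure $f\omega^n$ by combining H\"older's inequality with the classical fact that the $\omega^n$-volume of a set decays faster than any fixed power of its Monge-Amp\`ere capacity. Let $q=p/(p-1)$ be the conjugate exponent and set $V(K):=\int_K\omega^n$. H\"older's inequality with respect to the finite measure $\omega^n$ gives
\[
\int_K f\,\omega^n\ \le\ \|f\|_{L^p(\Omega,\omega^n)}\cdot V(K)^{1/q}.
\]
Thus (\ref{cap. domination ineq.}) reduces to a volume--capacity estimate of the form $V(K)\le C\,[\capa(K,\Omega)]^{2q}$ for a constant $C=C(\Omega,\omega)$ and every relatively compact Borel set $K\subset\Omega$; raising to the power $1/q$ and setting $D:=\|f\|_{L^p}C^{1/q}$ then yields the claim. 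Note that a surplus power of the capacity is harmless here, since $\capa(K,\Omega)$ is bounded above on the relatively compact $\Omega$, so any exponent at least $2$ suffices.

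The volume--capacity estimate is the analytic core, and I would obtain it by transporting the classical Euclidean estimate onto $\Omega$. As $X^{\sing}$ carries no $\omega^n$-mass, $V(K)=V(K\cap\Omega^{\reg})$; moreover, by the existence of arbitrarily small-capacity neighborhoods of $K\cap X^{\sing}$ (exactly as used in the proof of Proposition \ref{decreasing sequence}) the singular part of $K$ is negligible for the capacity as well, so I may restrict to the regular locus. I would cover the compact set $\overline{K}\cap\Omega^{\reg}$ by finitely many coordinate balls $B_1,\dots,B_m\Subset\Omega$, each biholomorphic to a ball in $\CC^n$. On each $B_i$ the space-intrinsic plurisubharmonic functions are the ordinary ones, so $\capa(\cdot,B_i)$ is the usual Bedford--Taylor capacity and $\omega^n$ is comparable to Lebesgue measure; the classical estimate
\[
\mathrm{Vol}_{\mathrm{Eucl}}(E)\ \le\ C_0\exp\!\left(-c_0\,[\capa(E,B_i)]^{-1/n}\right)
\]
therefore applies on each chart. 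Feeding in the comparison $\capa(E,B_i)\le C_1\,\capa(E,\Omega)$ of relative capacities and using that exponential decay beats every polynomial rate, i.e.\ $\exp(-c_0\,t^{-1/n})\le C_\alpha\,t^{\alpha}$ as $t\to0^{+}$ for any $\alpha>0$, with $\alpha=2q$, I would finally sum over the finite cover to arrive at $V(K)\le C\,[\capa(K,\Omega)]^{2q}$.

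The step I expect to be the main obstacle is the capacity comparison $\capa(E,B_i)\le C_1\,\capa(E,\Omega)$, needed with a constant uniform over the finite cover. The two capacities are ordered the ``wrong'' way by monotonicity, since passing to the larger reference domain $\Omega$ only decreases the relative capacity; what rescues the argument is the standard comparability of relative Monge--Amp\`ere capacities for nested domains $E\Subset B_i\Subset\Omega$, which supplies precisely the reverse bound up to a multiplicative constant. Establishing this comparison on the possibly singular $\Omega$ requires working within Bedford's pluripotential framework for complex spaces and keeping the constants uniform across the cover; once it is in place, the elementary bookkeeping above completes the proof.
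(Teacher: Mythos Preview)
Your overall strategy---H\"older's inequality followed by a volume--capacity comparison and the elementary fact that $\exp(-t^{-1/n})=O(t^\alpha)$ for every $\alpha>0$---is exactly what the paper does. The paper, however, does not attempt to derive the volume--capacity estimate by a chart covering on the regular locus; it quotes it directly as Lemma~1.9 of \cite{GGZ23}, which supplies
\[
\mathrm{Vol}_\omega(K)\le C\exp\!\bigl(-\capa(K,\Omega)^{-1/n}\bigr)
\]
with a constant $C=C(n,\Omega)$ independent of $K$.

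Your proposed derivation of this estimate has a genuine uniformity gap. You cover ``the compact set $\overline{K}\cap\Omega^{\reg}$'' by finitely many coordinate balls, but this set is not compact when $\overline{K}$ meets $\Omega^{\sing}$, and in any case the cover---hence the number $m$ of charts and every comparison constant between $\capa(\cdot,B_i)$ and $\capa(\cdot,\Omega)$---depends on $K$. The lemma demands a single constant $D$ valid for \emph{all} relatively compact Borel $K\subset\Omega$, and your bookkeeping does not produce one. The observation that $X^{\sing}$ admits neighborhoods of arbitrarily small capacity (as in Proposition~\ref{decreasing sequence}) lets you discard the singular part for a \emph{fixed} $K$, but it does not yield a $K$-independent bound; likewise, the reverse capacity comparison $\capa(E,B_i)\le C_1\,\capa(E,\Omega)$ is only available with a uniform $C_1$ when $E$ stays inside a \emph{fixed} compact of $B_i$, which your $E=K\cap B_i$ need not do. The argument in \cite{GGZ23} avoids all of this by fixing once and for all a finite cover of $\overline{\Omega}$ by local embeddings $j_p:V\hookrightarrow\CC^N$ (which exist at singular points as well), comparing the intrinsic capacity on $\Omega$ with the ambient Euclidean capacity in $\CC^N$, and invoking the classical exponential volume--capacity inequality there; the constants then depend only on the fixed cover of $\overline{\Omega}$, not on $K$.
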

\begin{proof}
	By the H\"{o}lder inequality, we have
	\[
	\int_{K}f\omega^n\leq \|f\|_{L^p}\cdot [\textup{Vol}_{\omega}(K)]^{\frac{1}{q}},
	\]
	where
	\[
	\|f\|_{L^p}:=\bigg(\int_{\Omega}f^p\omega^n\bigg)^{\frac{1}{p}},~q:=\bigg(1-\frac{1}{p}\bigg)^{-1},~\textup{Vol}_{\omega}(K):=\int_{K}\omega^n.
	\]
	Since $X$ is assumed to be reduced and locally irreducible, there exists a constant $C=C(n,\Omega)>0$ such that
	\begin{equation}\label{capacity volume comparison}
		\textup{Vol}_{\omega}(K)\leq C\cdot \textup{exp}(-\capa(K)^{-\frac{1}{n}})
	\end{equation}
     by Lemma 1.9 in \cite{GGZ23}.
	Therefore, we have
	\[
	\int_{K}f\omega^n\leq C^{\frac{1}{q}}\|f\|_{L^p}\cdot \textup{exp}\bigg(-\frac{1}{q}\,\capa(K)^{-\frac{1}{n}}\bigg).
	\]
	Now (\ref{cap. domination ineq.}) follows as $\textup{exp}(-t^{-\frac{1}{n}})=O(t^2)$ if $t\to 0^{+}$.
\end{proof}
Denote by $\mathbb{R}_{\geq 0}$ the set of nonnegative real numbers. For the sake of completeness, we recapitulate the proof of the following
\begin{lemma}[\cite{EGZ09}]\label{incr ftn lemma}
Let $g:\mathbb{R}_{\geq 0}\to \mathbb{R}_{\geq 0}$ be a decreasing right-continuous function satisfying $\lim_{x\to +\infty}g(x)=0$. Suppose that there exists a constant $B>0$ such that
\begin{equation}\label{equation}
t\cdot g(s+t)\leq B\{g(s)\}^{2}
\end{equation}
for any $s,t>0$. Then  $g(s_{\infty})=0$ for some finite number $s_{\infty}=s_{\infty}(B)>0$.
\end{lemma}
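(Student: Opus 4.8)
The plan is to run a De Giorgi--type iteration: I would build an increasing sequence $s_0<s_1<\cdots$ whose increments are calibrated so that $g(s_j)$ decays geometrically while the total displacement $\sum_j(s_{j+1}-s_j)$ stays finite; its limit $s_\infty$ will then be the sought point. The quadratic exponent on the right-hand side of (\ref{equation}) is precisely what makes such a double requirement achievable.

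First I would fix any $s_0>0$ and assume $g(s_0)>0$, since otherwise there is nothing to prove. I then define recursively $s_{j+1}:=s_j+t_j$ with the choice $t_j:=2B\,g(s_j)$. Substituting $s=s_j$ and $t=t_j$ into the hypothesis (\ref{equation}) and dividing by $t_j>0$ gives
\[
g(s_{j+1})=g(s_j+t_j)\leq \frac{B\,g(s_j)^2}{t_j}=\tfrac12\,g(s_j),
\]
so by induction $g(s_j)\leq 2^{-j}g(s_0)$; in particular $g(s_j)\to 0$. (If $g(s_j)=0$ at some finite stage the recursion simply terminates and $s_\infty:=s_j$ already works, so I may assume $g(s_j)>0$ throughout, which keeps every $t_j$ positive.)

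Next I would verify summability of the increments: from $t_j=2B\,g(s_j)\leq 2B\cdot 2^{-j}g(s_0)$ one gets $\sum_{j\geq 0}t_j\leq 4B\,g(s_0)<\infty$. Hence $s_\infty:=\lim_j s_j=s_0+\sum_{j\geq 0}t_j$ is finite, bounded by $s_0+4B\,g(s_0)$; letting $s_0\to 0^+$ and using right-continuity of $g$ at the origin one may in fact take $s_\infty\leq 4B\,g(0)$, which displays the dependence on $B$ asserted in the statement. Finally, since $g$ is decreasing and $s_\infty\geq s_j$ for every $j$, I conclude $0\leq g(s_\infty)\leq g(s_j)\leq 2^{-j}g(s_0)\to 0$, so $g(s_\infty)=0$.

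The only delicate step, and the one I would expect to be the crux, is the calibration of the increments $t_j$: they must be large enough to force the geometric decay $g(s_{j+1})\leq \tfrac12 g(s_j)$, yet small enough that $\sum_j t_j$ converges and $s_\infty$ remains finite. The balance is dictated entirely by the exponent $2$ in (\ref{equation}); with a linear bound $t\,g(s+t)\leq B\,g(s)$ the analogous scheme would force a divergent series, which is exactly why the quadratic estimate of Lemma \ref{cap. dom. lemma} is the ingredient that drives the argument. I note that the hypotheses $\lim_{x\to+\infty}g(x)=0$ and right-continuity are not strictly necessary to produce a finite vanishing point by this construction; they serve to guarantee that the zero set of $g$ is a closed half-line $[s_\infty,\infty)$ and to phrase the threshold in terms of $g(0)$, which is the form needed in the application to Theorem \ref{C^0 estimate}.
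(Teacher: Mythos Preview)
Your proof is correct and follows the same De Giorgi iteration scheme as the paper: build a sequence $(s_j)$ with $g(s_j)\leq 2^{-j}g(s_0)$, bound the increments by $s_{j+1}-s_j\leq 2B\,g(s_j)$, and sum the resulting geometric series to get $s_\infty\leq s_0+4B\,g(s_0)$.

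The one genuine difference is in how the sequence is generated. The paper defines $s_{j+1}$ implicitly as $\inf\{s>s_0:g(s)<g(s_j)/2\}$, then invokes right-continuity to secure $g(s_{j+1})\leq g(s_j)/2$ and the hypothesis $\lim_{x\to\infty}g(x)=0$ to guarantee this infimum is finite; the increment bound is then extracted by applying \eqref{equation} on the interval $(s_j,s_{j+1})$ and dividing by $g(s)>0$. You instead set $s_{j+1}:=s_j+2B\,g(s_j)$ explicitly and read off the halving $g(s_{j+1})\leq g(s_j)/2$ directly from \eqref{equation}. Your route is shorter and, as you observe, does not actually use right-continuity or the vanishing at infinity to produce a finite zero; the paper's formulation, on the other hand, makes the threshold $s_{j+1}$ the \emph{first} level at which $g$ halves, which is the form that feeds most transparently into the dichotomy $g(0)\lessgtr(2B)^{-1}$ used in the proof of Theorem~\ref{C^0 estimate}.
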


\begin{proof}
	Choose a real number $s_0>0$ so that $g(s_0)<(2B)^{-1}$. To find $s_{\infty}$, define a sequence $\{s_{j}\}\subset \mathbb{R}_{\geq 0}$ as follows: if $g(s_0)=0$, then we set $s_{\infty}=s_0$ to finish the proof. Otherwise, define
	\[
	s_1:=\inf\set{s>s_0:g(s)<\frac{g(s_0)}{2}}.
	\]
	Since $g$ is right-continuous, we have $g(s_1)\leq g(s_0)/2$. If $g(s_1)=0$, then we set $s_{\infty}=s_1$. Otherwise, we proceed inductively and define
	\[
	s_{j+1}:=\inf\bigg\{s>s_0:g(s)<\frac{g(s_j)}{2}\bigg\}.
	\]
    Note that the sequence $\{s_j\}$ enjoys the following properties:
	\begin{enumerate}
		\setlength\itemsep{0.1em}
		\item $s_{j+1}> s_j$  $\forall j\geq 0$.
		\item $g(s_{j+1})\leq \frac{1}{2}g(s_j)$  $\forall j\geq 0$.
		\item $0<\frac{1}{2}g(s_j)\leq g(s)\leq g(s_j)$ for any $j\geq 0$ and $s\in (s_j,s_{j+1})$.
	\end{enumerate}
	Fix $j\geq 0$ and let $s\in (s_j,s_{j+1})$. Then it follows from (\ref{equation}) that
	\[
	(s-s_j)\cdot g(s)\leq B\cdot \{g(s_j)\}^2\leq 2B\cdot g(s)\cdot g(s_j).
	\]
	Dividing the both sides of the inequality above by $g(s)>0$ and letting $s\to s_{j+1}$, we obtain
	\[
	s_{j+1}-s_j\leq 2B\cdot g(s_j)\leq \frac{2B\cdot g(s_0)}{2^j}
	\]
	for each $j\geq 0$. Define
	\begin{equation}\label{S infty estimate}
		s_{\infty}:=s_0+\sum_{j=0}^{\infty}(s_{j+1}-s_j)\leq s_0+4B\cdot g(s_0)<+\infty.
	\end{equation}
	Then  	$g(s_{\infty})\leq g(s_j)\leq 2^{-j}g(s_0)$ for any $j\geq 0$ so that $g(s_{\infty})=0$. 
\end{proof}
\textit{Proof of Theorem \ref{C^0 estimate}}. Fix $\varep>0,\,s\geq 0$ and define
\[
g(s):=[\capa(\{v-u>s+\varep\},\Omega)]^{\frac{1}{n}}.
\]
Then $g:\mathbb{R}_{\geq 0}\to \mathbb{R}_{\geq 0}$ is right-continuous by (\ref{capacity union}) and $\lim_{s\to +\infty}g(s)=0$ as $u,v$ are bounded on $\Omega$. Since $\liminf_{\Omega \ni \zeta \to z}\,(u(\zeta)-v(\zeta))\geq 0$ for each $z\in \partial \Omega$, the Borel set
\[
\{z\in \Omega:v(z)-u(z)>s \}
\]
is relatively compact in $\Omega$. Hence by Lemma \ref{capacity lemma} and Lemma \ref{cap. dom. lemma}, the function $g$ satisfies the assumptions in Lemma \ref{incr ftn lemma} if we let $B:=D^{\frac{1}{n}}$. So there exists a number $s_{\infty}=s_{\infty}(\|f\|_{L^p})>0$ such that
$
\capa(\{v-u>s_{\infty}+\varep\})=0.$ 
Then (\ref{capacity volume comparison}) implies $v-u\leq s_{\infty}+\varep$ on $\Omega-B'$ for some Borel set $B'\subset \Omega$ satisfying $\textup{Vol}_{\omega}(B')=0$. By the upper semicontinuity of $u$ and the continuity of $v$, we obtain 
\begin{equation}\label{C^0 estimate 1}
\sup_{\Omega}\,(v-u)\leq s_{\infty}+\varep.
\end{equation}
This estimate together with  (\ref{S infty estimate}) yields
\begin{equation*}
\sup_{\Omega}\,(v-u)\leq 4B\cdot[\capa(\{v-u>s_0+\varep\})]^{\frac{1}{n}}+s_0+\varep.
\end{equation*}
 If $g(0)<(2B)^{-1}$, then it follows from the proof of Lemma \ref{incr ftn lemma} that one can set $s_0=0$ in the inequality above. So we obtain the desired estimate. If $g(0)\geq (2B)^{-1}$, then (\ref{C^0 estimate 1}) implies
\[
\sup_{\Omega}\,(v-u)\leq 2Bs_{\infty}\cdot \frac{1}{2B}+\varep\leq 2Bs_{\infty}\cdot g(0)+\varep=2Bs_{\infty} [\capa(\{v-u>\varep\})]^{\frac{1}{n}}+\varep.
\]
 \hfill $\Box$

\subsection{Hartogs' lemma on complex analytic spaces}
By a $\textit{resolution}$ of a complex space $X$, we mean a proper bimeromorphic morphism $\pi:\tilde{X}\to X$, $\tilde{X}$ being a complex manifold, such that the map $\pi: \pi^{-1}(X^{\textup{reg}})\to X^{\textup{reg}}$ is biholomorphic and $\pi^{-1}(X^{\textup{sing}})$ is a simple normal crossing divisor. A complex space always admits a resolution \cite{Hi64, Wlo05}.

Denote by $\Delta$ the open unit disc in $\mathbb{C}$ centered at the origin. An upper semicontinuous function $u:X\to \mathbb{R}\cup \{-\infty\}$ is said to be $\textit{weakly plurisubharmonic}$ on $X$ if $u\circ f$ is plurisubharmonic on $\Delta$ for any holomorphic function $f:\Delta \to X$. By \cite{FornaessNarasimhan80}, $u$ is plurisubharmonic on $X$ if, and only if, it is weakly plurisubharmonic on $X$. 
\begin{lemma}[cf. \cite{Hartogs1906}]\label{Hartogs lemma}
Let $\Omega$ be an open subset of a complex analytic space $X$. Suppose that a sequence $\{u_j\}\subset \psh(\Omega)$ is locally uniformly bounded from above and there exists a number $m\in \mathbb{R}$ such that 
\[
\limsup_{j\to \infty}u_j(z)\leq m
\]
for any $z\in \Omega$. Then for each $\varep>0$ and a relatively compact set $K\subset \Omega$, there exists a positive integer $j_0=j_0(\varep,K)$ such that $u_j(z)\leq m+\varep$ if $z\in K$ and $j\geq j_0$.

\end{lemma}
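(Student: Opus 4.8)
The plan is to reduce the statement to the classical Hartogs' lemma on complex \emph{manifolds} by pulling everything back through a resolution. The point of this reduction is that on a manifold the coordinate charts are genuine open embeddings into $\CC^n$, so the pointwise $\limsup$ hypothesis survives intact in local coordinates --- which is precisely what fails if one tries to extend the $u_j$ through the singular embeddings $j_p$ directly, since the ambient extensions in $\CC^N$ need not satisfy the $\limsup$ bound away from $j_p(V)$.

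First I would fix a resolution $\pi:\tilde X\to X$, which exists by \cite{Hi64, Wlo05}, and set $\tilde u_j:=u_j\circ\pi$ on the open set $\pi^{-1}(\Omega)\subset\tilde X$. The first key step is to check that each $\tilde u_j$ is plurisubharmonic on $\pi^{-1}(\Omega)$. For this I would invoke the characterization recalled just above: for any holomorphic disc $g:\Delta\to\pi^{-1}(\Omega)$ the composition $\pi\circ g:\Delta\to\Omega$ is again a holomorphic disc, so $\tilde u_j\circ g=u_j\circ(\pi\circ g)$ is subharmonic because $u_j$ is weakly plurisubharmonic; together with the upper semicontinuity of $u_j\circ\pi$, the result of \cite{FornaessNarasimhan80} shows $\tilde u_j\in\psh(\pi^{-1}(\Omega))$.

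Next I would transfer the two hypotheses, and properness of $\pi$ is the crucial tool here. If $\tilde K$ is a relatively compact subset of $\pi^{-1}(\Omega)$, then $\pi(\overline{\tilde K})$ is a compact subset of $\Omega$, on which the local uniform upper bounds for $\{u_j\}$ combine, by a finite subcover, into a single uniform bound $C$; pulling back, $\tilde u_j\leq C$ on $\tilde K$, so $\{\tilde u_j\}$ is locally uniformly bounded above on $\pi^{-1}(\Omega)$. The pointwise bound is immediate: $\limsup_{j}\tilde u_j(w)=\limsup_j u_j(\pi(w))\leq m$ for every $w\in\pi^{-1}(\Omega)$. With these in hand I would apply the classical Hartogs' lemma on the complex manifold $\tilde X$ (which itself follows from the $\CC^n$ case via the sub-mean-value inequality and reverse Fatou, comparing the integrals of $\max(u_j,m)$ over concentric balls to obtain $\limsup_j\sup u_j\leq m$ on a slightly smaller ball).

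Finally, given a relatively compact $K\subset\Omega$ and $\varep>0$, I would set $\tilde K:=\pi^{-1}(\overline K)$; by properness this is a compact subset of $\pi^{-1}(\Omega)$, so Hartogs' lemma on $\tilde X$ produces $j_0$ with $\tilde u_j\leq m+\varep$ on $\tilde K$ for all $j\geq j_0$. Since $\pi$ is surjective, every $z\in K$ has a preimage $w\in\tilde K$ with $u_j(z)=\tilde u_j(w)\leq m+\varep$, which is the desired estimate with $j_0=j_0(\varep,K)$. I expect the main obstacles to be the verification that the resolution pullback preserves plurisubharmonicity and that properness genuinely converts ``relatively compact in $\Omega$'' into ``relatively compact in $\pi^{-1}(\Omega)$''; once these are secured, the singular structure of $X$ plays no further role and the manifold statement closes the argument.
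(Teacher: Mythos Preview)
Your proposal is correct and follows essentially the same route as the paper's proof: pull back through a resolution, use Fornaess--Narasimhan to see that the pullbacks are plurisubharmonic, verify the hypotheses transfer (using properness for compactness), apply the classical Hartogs lemma on the manifold, and push the conclusion back down via surjectivity. The only cosmetic difference is that the paper resolves $\Omega$ directly rather than resolving $X$ and restricting to $\pi^{-1}(\Omega)$, and it covers the compact preimage $\pi^{-1}(K)$ by finitely many relatively compact sets before applying Hartogs; your version spells out the properness argument more explicitly, but the substance is identical.
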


\begin{proof}
  Let $\pi:\tilde{\Omega}\to \Omega$ be a resolution of $\Omega$ and $\tilde{K}:=\pi^{-1}(K).$ Note that each function $\tilde{u}_j:=u_j\circ \pi$ is plurisubharmonic on $\tilde{\Omega}$ as it is weakly plurisubharmonic on $\tilde{\Omega}$. Then the sequence $\{\tilde{u}_j\}\subset \textup{PSH}(\tilde{\Omega})$ is locally uniformly bounded from above on $\tilde{\Omega}$ and it satisfies 
\[
\limsup_{j\to \infty}\tilde{u}_j(z)=\limsup_{j\to \infty}u_j(\pi(z))\leq m
\] 
for each $z\in \tilde{\Omega}$. Applying Hartogs' lemma \cite{Hartogs1906} to the sequence $\{\tilde{u}_j\}$ and finitely many relatively compact subsets of the compact set $\tilde{K}$, one can find a positive integer $j_0=j_0(\varep,K)$ such that $\tilde{u}_j(z)\leq m+\varep$ whenever $j\geq j_0$ and $z\in \tilde{K}$. Therefore, $u_j(z)\leq m+\varep$ if $z\in K$ and $j\geq j_0$ as desired.
\end{proof}

Given $z\in \Omega$, denote by $\mathcal{N}_z$ the set of open neighborhoods of $z$ in $\Omega$. The $\textit{lower semicontinuous}$ $\textit{regularization}$ of a function $u:\Omega\to (-\infty,+\infty]$ is a lower semicontinuous function $u_{\ast}:\Omega \to (-\infty,+\infty]$ defined as
 \begin{equation}\label{liminf}
u_{\ast}(z):=\sup_{V\in \mathcal{N}_{z}}\inf_{V}u
 \end{equation}
for each $z\in \Omega$. We shall need the following version of Lemma \ref{Hartogs lemma} in Section \ref{section for proof} (cf. Proposition 2.4.1 in \cite{Kolo98}).
\begin{corollary}\label{Hartog lemma 2}
	Let $\Omega$ and $X$ be as before and let $c>0,t>1$. Suppose that a sequence $\{u_j\}\subset \psh\cap C^0(\Omega)$ is uniformly bounded and it converges pointwise to a positive function $u\in \psh(\Omega)$. If $u-tu_{\ast}<c$ on a compact set $K\subset \Omega$, then there exist a positive integer $j_0$ and an open neighborhood $U\subset \Omega$ of $K$ such that
	\[
	u_j(z)<tu(z)+c~\text{for any}~z\in U~\text{and}~j\geq j_0
	\]
\end{corollary}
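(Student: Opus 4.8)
The plan is to reduce the statement to a purely local one near each point of $K$ and then patch the local neighborhoods together using the compactness of $K$. Fix $z_0\in K$. I aim to produce an open neighborhood $V=V(z_0)$ of $z_0$ and an integer $j(z_0)$ so that $u_j<tu+c$ on $V$ for every $j\geq j(z_0)$. Once this local claim is established, the sets $\{V(z_0)\}_{z_0\in K}$ form an open cover of $K$, so I can extract a finite subcover $V(z_1),\dots,V(z_m)$, set $U:=\bigcup_{i=1}^m V(z_i)$ and $j_0:=\max_i j(z_i)$, and the asserted conclusion follows at once.

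For the local step the idea is to trap both sides of the desired inequality between two constants that can be compared through the hypothesis $u(z_0)-tu_{\ast}(z_0)<c$. First I choose $\eta>0$ so small that
\[
u(z_0)-tu_{\ast}(z_0)<c-(t+2)\eta,
\]
which is possible since the inequality is strict at $z_0$. As $u$ is upper semicontinuous, there is a neighborhood $V_1$ of $z_0$ on which $u<u(z_0)+\eta$; and since $u_{\ast}(z_0)=\sup_{W\in\mathcal{N}_{z_0}}\inf_W u$, there is a neighborhood $V_2$ of $z_0$ on which $u>u_{\ast}(z_0)-\eta$, hence $tu>tu_{\ast}(z_0)-t\eta$ because $t>0$. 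The remaining ingredient is a uniform upper bound on $u_j$, and this is exactly where Hartogs' lemma enters: since $u_j\to u$ pointwise we have $\limsup_{j\to\infty}u_j(z)=u(z)\leq u(z_0)+\eta$ for every $z\in V_1$, and the sequence is uniformly bounded, so Lemma \ref{Hartogs lemma} applied on the open set $V_1$ with the constant $m=u(z_0)+\eta$ and a relatively compact neighborhood $\overline{V}$ of $z_0$ chosen so that $\overline{V}\subset V_1\cap V_2$ yields an integer $j(z_0)$ with $u_j\leq u(z_0)+2\eta$ on $V$ for all $j\geq j(z_0)$.

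Chaining these estimates on $V\subset V_2$ gives, for $j\geq j(z_0)$,
\[
u_j\leq u(z_0)+2\eta<tu_{\ast}(z_0)-t\eta+c<tu+c,
\]
where the middle strict inequality is precisely the choice of $\eta$ and the last one holds because $V\subset V_2$; this is the required local inequality. The one genuine obstacle is that the target bound $tu(z)+c$ is not constant in $z$, so Hartogs' lemma—which only delivers a constant upper bound—cannot be applied to the differences $u_j-tu$ directly. The device that removes this obstacle is to freeze $u$ at the value $u(z_0)$ on the upper side (using upper semicontinuity of $u$) and to freeze $tu$ at $tu_{\ast}(z_0)$ on the lower side (using the definition of the lower semicontinuous regularization), which turns the varying inequality into a comparison of two constants governed by the hypothesis, at the cost of shrinking to the neighborhood $V$. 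I note that the finiteness of $u_{\ast}$ needed throughout is guaranteed by the uniform boundedness of $\{u_j\}$, and that the argument uses only $t>0$.
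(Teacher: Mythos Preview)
Your proof is correct and follows essentially the same approach as the paper's: both arguments localize at a point $z_0\in K$, use the upper semicontinuity of $u$ together with the definition of $u_\ast$ to sandwich $u$ near $z_0$ between two constants, invoke Lemma~\ref{Hartogs lemma} on a relatively compact neighborhood to control $u_j$, and then finish by compactness of $K$. The only cosmetic difference is the bookkeeping of the auxiliary constants---you work with a single small $\eta$ satisfying $u(z_0)-tu_\ast(z_0)<c-(t+2)\eta$, whereas the paper threads two intermediate constants $c_1<c_2<c$ to reach the equivalent inequality $\sup_V u<t\inf_V u+c_2$ before applying Hartogs' lemma.
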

\begin{proof}
Fix $z_0\in K$. Then there exists a number $c_1\in (0,c)$ such that $u(z_0)-tu_{\ast}(z_0)<c_1$.
By the upper semicontinuity of $u$, one may choose an open  neighborhood $V$ of $z_0$ such that
$u(z)<tu_*(z_0)+c_1$ for each $z\in V$. It also follows from $(\ref{liminf})$ that 
\begin{equation*}
	u_*(z_0)-\frac{c_2-c_1}{t}<\inf_{V}u
\end{equation*}
for some $c_2\in (c_1,c)$, by shrinking $V$ if necessary. So we obtain
\begin{equation*}
u(z)<tu_*(z_0)+c_1<t\inf_Vu+c_2~\text{for any}~ z\in V.
\end{equation*}
This in turn implies
\begin{equation}\label{E:harnack_inequality}
\sup_V u < t\inf_V u+c_2.
\end{equation}
Let $W\subset V$ be a relatively compact open neighborhood of $z_0$ and recall that $\set{u_j}$ converges to $u$ pointwise. So by Lemma~\ref{Hartogs lemma} and \eqref{E:harnack_inequality},  there exists a positive integer $j_0$ depending on $\ol W$ and $c-c_2>0$ such that
\begin{equation*}
u_j(z)\leq \sup_Vu+(c-c_2)<t\inf_Vu+c\leq tu(z)+c
\end{equation*}
for any $z \in W$ and $j\geq j_0$.
Since $K$ is compact and $z_0\in K$ is arbitrary, one can find an open neighborhood $U$ of $K$ on which the inequality above holds after increasing $j_0$ if necessary. 
This completes the proof.
\end{proof}

\begin{remark}
Another proof of Lemma \ref{Hartogs lemma} in the case where $X$ is a Stein space can be found in \cite{AbduKamo23}. 
\end{remark}

\section{Proof of Theorem \ref{main theorem}}\label{section for proof}
Let $\varphi\in \psh(\omega)\cap L^{\infty}(X)$ be a solution to (\ref{MA equation}). We shall assume that $\varphi$ is discontinuous at some point on $X$ and derive a contradiction. Then we have
\begin{equation*}
d:=\textup{sup}_X\paren{\varphi-\varphi_*}>0
\end{equation*}
by the assumption. Since $\varphi-\varphi_*$ is a bounded upper semicontinuous function on the compact set $X$, the supremum is attained at some point $z_0\in X$. One may also assume further that
\begin{equation*}
	\varphi(z_0)
	=
	\min\set{\varphi(z):z\in X,\,\varphi(z)-\varphi_{\ast}(z)=d}
\end{equation*}
as the lower semicontinuous function $\varphi_{\ast}$ attains the minimum on the compact set $\{z\in X:\varphi(z)-\varphi_{\ast}(z)=d\}$ and $\varphi=\varphi_{\ast}+d$ holds on the set.

In this setting, we first settle the following

\begin{lemma}\label{L:minimum}
There exist open neighborhoods $B'\subset\subset B$ of $z_0$ in $X$ and $v\in C^\infty(B)$ such that $\omega=dd^cv$ on $B^\reg$ and 
\begin{equation*}
v(z_0)<v(z)~\text{for each}~ z\in B'\setminus\set{z_0}.
\end{equation*}
\end{lemma}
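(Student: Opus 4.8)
The plan is to take a local smooth potential of $\omega$ near $z_0$ and subtract from it a pluriharmonic function so that the strict positivity of $\omega$ forces a strict minimum at $z_0$; since removing a pluriharmonic function leaves $dd^c$ unchanged, the identity $\omega=dd^cv$ on $B^\reg$ will then be automatic. Concretely, I would fix a local embedding $j_{z_0}:V\to\CC^N$ with $p:=j_{z_0}(z_0)$ and an index $\alpha$ with $z_0\in U_\alpha$. By Definition~\ref{def Kahler form} the local potential $\rho_\alpha$ equals $v'\circ j_{z_0}$ for some smooth strictly plurisubharmonic $v'$ defined near $p$ in $\CC^N$, and $\omega=dd^c\rho_\alpha$ on $U_\alpha^\reg$.

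Next I would peel off the holomorphic part of the second-order Taylor jet of $v'$ at $p$. Set
\[
P(z):=\sum_{i}\frac{\partial v'}{\partial z_i}(p)(z_i-p_i)+\frac12\sum_{i,j}\frac{\partial^2 v'}{\partial z_i\partial z_j}(p)(z_i-p_i)(z_j-p_j),
\]
and let $H:=2\,\re P$, which is pluriharmonic on $\CC^N$ as the real part of a holomorphic polynomial. The second-order Taylor expansion of $v'$ at $p$, organized in complex coordinates, then gives
\[
v'(z)-v'(p)-H(z)=\sum_{i,j}\frac{\partial^2 v'}{\partial z_i\partial\bar z_j}(p)(z_i-p_i)\overline{(z_j-p_j)}+o(|z-p|^2),
\]
where the Hermitian form on the right is the Levi form of $v'$ at $p$, positive definite since $v'$ is strictly plurisubharmonic. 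Hence there exist $\lambda>0$ and $r>0$ with $v'(z)-v'(p)-H(z)\geq\tfrac{\lambda}{2}|z-p|^2$ whenever $|z-p|<r$.

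Finally I would set $v:=(v'-H)\circ j_{z_0}$ on a neighborhood $B$ of $z_0$ chosen so small that $j_{z_0}(B)\subset\{|z-p|<r\}$, together with a relatively compact $B'\subset\subset B$. Then $v$ is smooth, and because $H\circ j_{z_0}=2\,\re(P\circ j_{z_0})$ is the real part of a holomorphic function on $V$, it is pluriharmonic there, so $dd^cv=dd^c\rho_\alpha=\omega$ on $B^\reg$. Since $H(p)=0$ we have $v(z_0)=v'(p)$, and for $z\in B'\setminus\{z_0\}$ the injectivity of $j_{z_0}$ yields $j_{z_0}(z)\neq p$, whence $v(z)-v(z_0)\geq\tfrac{\lambda}{2}|j_{z_0}(z)-p|^2>0$, as required.

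The only genuine subtlety, and the point I would watch most carefully, is that the subtracted correction must descend to a bona fide pluriharmonic function on $X$ in the sense of the paper; this is precisely why I take $H$ to be the real part of a \emph{single} holomorphic polynomial rather than an arbitrary smooth pluriharmonic model, so that $H\circ j_{z_0}$ is manifestly pluriharmonic and does not alter $dd^c$. The strict-minimum estimate then transfers from the ambient ball to the analytic subvariety $j_{z_0}(V)$ at no cost, since the quadratic lower bound already holds on all of $\{|z-p|<r\}$.
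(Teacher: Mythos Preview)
Your proof is correct and follows essentially the same approach as the paper's: both take an ambient strictly plurisubharmonic potential for $\omega$, subtract the real part of a holomorphic polynomial killing the first-order and pure second-order $\partial^2/\partial z_i\partial z_j$ terms in its Taylor expansion, and then observe that the remaining Levi form forces a strict local minimum which restricts to the embedded variety. The paper's $h$ and your $-H$ play identical roles, and the only cosmetic difference is that you make the quadratic lower bound $\tfrac{\lambda}{2}|j_{z_0}(z)-p|^2$ explicit while the paper simply records that $\tilde z_0$ is a strict local minimum.
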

\begin{proof}
Let $j_{z_0}:B\to j_{z_0}(B)\subset \mathbb{C}^N$ be a local embedding of an open neighborhood $B$ of $z_0$ in $X$. By shrinking $B$ if necessary, one may choose $v\in \textup{SPSH}\cap  C^{\infty}(B)$  such that $\omega= dd^cv$ on $B^\reg$. Then by definition, there  exist an open neighborhood $U$ of $j_{z_0}(B)$ in $\mathbb{C}^N$ and $\tilde{v}\in  \textup{SPSH}\cap C^{\infty}(U)$ for which $\tilde{v}\circ j_{z_0} =v $ holds on $B$. Define a pluriharmonic function $h:\mathbb{C}^N\to \mathbb{R}$ as the real part of the function 
\[
f(z)= -2\sum_{k=1}^{N} z_k\frac{\partial \tilde{v}}{\partial z_k}(\tilde{z}_0)-\sum_{j,k=1}^{N}z_jz_k\frac{\partial^2 \tilde{v}}{\partial z_j\partial z_{k}}(\tilde{z}_0)
\]
holomorphic on $\mathbb{C}^N$ and let $w:=\tilde{v}+h\in \psh(U),$ $\tilde{z}_0:=j_{z_0}(z_0)$. 
Then for any $z=(z_1,\dots,z_N)\in \mathbb{C}^N$ satisfying $\tilde{z}_0+z\in U$, we have
\begin{align*}
        w(\tilde{z}_0+z)-w(\tilde{z}_0)
        =&
        2\cdot \textup{Re}
        \paren{
                    \sum_{k=1}^{N}z_k\frac{\partial w}{\partial z_k}(\tilde{z}_0)
                    +
                    \frac{1}{2}\sum_{j,k=1}^{N}z_jz_k\frac{\partial^2 w}{\partial z_j\partial z_k}(\tilde{z}_0)
        }
        \\
        &+
        \sum_{j,k=1}^{N}z_j\bar{z}_k\frac{\partial^2 w}{\partial z_j\partial \bar{z}_k}(\tilde{z}_0)+o(\|z\|^2)
        \\
        =&
        \sum_{j,k=1}^{N}z_j\bar{z}_k\frac{\partial^2 \tilde{v}}{\partial z_j\partial \bar{z}_k}(\tilde{z}_0)+o(\|z\|^2).
\end{align*}
This implies that $\tilde{z}_0$ has to be a strict local minimum point of $w$ since $\tilde{v}$ is strictly plurisubharmonic on $U$. So by replacing $v$ with $w\circ j_{z_0}$, one can find a relatively compact open neighborhood $B'\subset B$ of $z_0$ on which $z_0$ is a strict minimum point of $v$. Furthermore, we have 
\begin{equation*}
dd^c(w\circ j_{z_0})=dd^c(v+h\circ j_{z_0})=dd^cv=\omega~\text{on}~ B^\reg
\end{equation*}
as $h$ is pluriharmonic on $\mathbb{C}^N$. This completes the proof.
\end{proof}
Then it follows from (\ref{MA equation}) that the function $u:=v+\varphi\in\psh\cap L^{\infty}(B)$ satisfies (\ref{density assumption}) on $B$. 
We may assume that $u>0$ on $B$ and $u(z_0)>d$ by adding a constant to $u$ as $\varphi$ is bounded on $X$. Moreover,  $j_{z_0}(B)$ can be chosen to be an intersection of a complex analytic space and some open ball in $\mathbb{C}^N$ by shrinking $B$ if necessary.
Then $B$ is strictly pseudoconvex so there exists a sequence $\{u_j\}\subset  \psh \cap C^{\infty}(B)$ that decreases to $u$ pointwise on $B$ \cite{FornaessNarasimhan80}. By Lemma~\ref{L:minimum}, we also have 
\begin{equation*}
\inf_{S'}v-v(z_0)>0,~S':=\partial B'.
\end{equation*}

For each $a\in [0,d]$, define
\begin{equation*}
E(a):=\{z\in \overline{B'}: \varphi(z)-\varphi_{\ast}(z)\geq d-a\}\ni z_0,~E:=E(0),
\end{equation*}
and
\begin{equation*}
c(a):=\varphi(z_0)-\inf_{E(a)}\varphi\geq 0.
\end{equation*}
Then the set $E(a)$ is compact for any $a\in [0,d]$ and $c(a)$ is a nondecreasing function. We claim that $\lim_{j\to \infty}c(1/j)=0$. Since $\varphi_{\ast}$ is lower semicontinuous on each compact set $E(1/j)$, there exists a point $x_j\in E(1/j)$ such that $\inf_{E(1/j)}\varphi_{\ast}=\varphi_{\ast}(x_j)$ for each $j\geq 1$. Let $x\in X$ be the accumulation point of $\{x_j\}.$ Then
\begin{align*}
\varphi(x)&\geq \limsup_{j\to \infty}\varphi(x_j)\geq\liminf_{j\to \infty}\varphi(x_j)\geq \liminf_{j\to \infty}\{\varphi_{\ast}(x_j)+d-1/j\}\geq \varphi_{\ast}(x)+d
\end{align*}
so that $x\in E$. Note also that $\varphi_{\ast}(x)\geq \varphi_{\ast}(z_0)$ by the choice of $z_0$. Therefore,
\begin{align*}
	\limsup_{j\to \infty}c(1/j)&= \varphi(z_0)-\liminf_{j\to \infty}\inf_{E(\frac{1}{j})}\varphi\leq \varphi(z_0)-\liminf_{j\to \infty}\inf_{E(\frac{1}{j})}\varphi_{\ast}-d\\ 
	& =  \varphi_{\ast}(z_0)-\liminf_{j\to \infty}\varphi_{\ast}(x_j)\leq \varphi_{\ast}(z_0)-\varphi_{\ast}(x)\leq 0
\end{align*}
and this settles the claim. 
Define
\begin{equation*}
A:=u(z_0)>d\quad\text{and}\quad b:=\inf_{S'}v-v(z_0)>0.
\end{equation*}
By the previous arguments, one may choose a real number $a_0$ satisfying
\begin{equation}\label{inequality 1}
0<a_0<d,~\text{and}~c(a)<\frac{b}{3}~\text{for any}~a\leq a_0.
\end{equation}
 Also, choose $t>1$ so that
\begin{equation}\label{inequality 2}
(t-1)(A-d)<a_0<(t-1)(A-d+\frac{2}{3}b).
\end{equation}

\begin{lemma}\label{L:est}
There exist an open neighborhood $V$ of $S'$ and a positive integer $j_0$ such that
\begin{equation*}
u_j<tu+d-a_0\;\;\text{on}\;\;V
\end{equation*}
for each $j\geq j_0$.
\end{lemma}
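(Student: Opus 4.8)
The plan is to reduce the lemma to a single application of Corollary~\ref{Hartog lemma 2}, taken with the compact set $K=S'$ and the constant $c=d-a_0>0$ (recall $a_0<d$). The sequence $\{u_j\}$ is squeezed between $u$ and $u_1$, hence uniformly bounded, and it converges pointwise to the positive function $u\in\psh(B)$, so every hypothesis of the corollary is in place except the pointwise bound $u-tu_*<d-a_0$ on $S'$. Granting that bound, the corollary furnishes a positive integer $j_0$ and an open neighborhood $V$ of $S'$ with $u_j<tu+(d-a_0)$ on $V$ for all $j\ge j_0$, which is precisely the assertion. Thus everything comes down to verifying $u-tu_*<d-a_0$ on $S'$.

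Since $v$ is continuous, $u_*=v+\varphi_*$, so $u-u_*=\varphi-\varphi_*$ and
\[
u-tu_*=(\varphi-\varphi_*)-(t-1)u_*;
\]
moreover $u>0$ on $B$ forces $u_*\ge 0$. First I would split $S'$ according to membership in the compact set $E(a_0)$. On $S'\setminus E(a_0)$ we have $\varphi-\varphi_*<d-a_0$ by definition of $E(a_0)$, and $(t-1)u_*\ge 0$, so the identity above gives $u-tu_*<d-a_0$ at once.

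The substantive case is $z\in S'\cap E(a_0)$. Here I would combine three lower bounds: $v(z)\ge v(z_0)+b$ (definition of $b$), and $\varphi(z)\ge\inf_{E(a_0)}\varphi=\varphi(z_0)-c(a_0)$ (definition of $c(a_0)$ together with $z\in E(a_0)$), whence $u(z)\ge A+b-c(a_0)$; and the global bound $\varphi_*\ge\varphi-d$. Substituting the last bound and using $1-t<0$ yields
\[
u-tu_*\le (1-t)u+td\le (1-t)\bigl(A+b-c(a_0)\bigr)+td,
\]
and a direct rearrangement shows the right-hand side is $<d-a_0$ exactly when
\[
a_0<(t-1)\bigl(A-d+b-c(a_0)\bigr).
\]
This is where the calibrated choices of $a_0$ and $t$ enter: by \eqref{inequality 1} we have $c(a_0)<b/3$, so $b-c(a_0)>\tfrac23 b$, and \eqref{inequality 2} gives $(t-1)(A-d+\tfrac23 b)>a_0$; chaining the two inequalities produces the required strict estimate.

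The hard part will be the bookkeeping of this second case — applying each lower bound with the correct sign (the coefficient $1-t$ is negative) and checking that the constants fixed in \eqref{inequality 1}--\eqref{inequality 2} leave a genuinely strict margin. Everything else is formal: the reduction to the boundary estimate via Corollary~\ref{Hartog lemma 2}, the identity $u_*=v+\varphi_*$, and the nonnegativity $u_*\ge 0$.
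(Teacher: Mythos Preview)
Your proof is correct and follows essentially the same strategy as the paper's: split $S'$ according to membership in $E(a_0)$, use the trivial bound $\varphi-\varphi_*<d-a_0$ off $E(a_0)$, and on $S'\cap E(a_0)$ exploit the boundary gain $b$ together with the calibrated choices \eqref{inequality 1}--\eqref{inequality 2}. The arithmetic in your second case is fine; note that your chain
\[
u - tu_* \le (1-t)u + td \le (1-t)\bigl(A+b-c(a_0)\bigr) + td
\]
is equivalent to the paper's direct lower bound $u_*(y)\ge A-d+b-c(a_0)$, since you are using $u_*\ge u-d$ and then the same lower bound on $u$.

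The one organizational difference worth noting: you establish the pointwise inequality $u-tu_*<d-a_0$ on \emph{all} of $S'$ and then invoke Corollary~\ref{Hartog lemma 2} once with $K=S'$. The paper instead applies the corollary twice --- first to the compact set $S'\cap E(a_0)$ to obtain a neighborhood $V_1$, and then to the compact set $S'\setminus V_1$ (on which $E(a_0)$ is avoided, so the easy bound applies). Your single-application route is slightly cleaner and avoids introducing the auxiliary set $V_1$; the paper's two-step version is perhaps more in the spirit of treating the two regimes separately, but both arrive at the same conclusion with the same ingredients.
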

\begin{proof}
Choose $y\in S'\cap E(a_0)$. Then it follows from \eqref{inequality 1} that
\begin{equation*}
u_{\ast}(y)\geq v(z_0)+b+\varphi(y)-d\geq v(z_0)+b+\varphi(z_0)-c(a_0)-d\geq A-d+\frac{2}{3}b.
\end{equation*}
Hence by (\ref{inequality 2}), $(t-1)u_{\ast}(y)>a_0$ and this in turn implies
\begin{equation*}
u(y)\leq u_{\ast}(y)+d<tu_{\ast}(y)+d-a_0.
\end{equation*}
By Corollary \ref{Hartog lemma 2}, there exists an open neighborhood $V_1$ of the compact set $S'\cap E(a_0)$ and a positive integer $j_1$ such that
\begin{equation}\label{est}
u_j<tu+d-a_0\
\end{equation}
holds on $V_1$ for each $j\geq j_1$.
On the other hand, since $E(a_0)\cap (S'\setminus V_1)=\emptyset,$ the inequality $u-tu_{\ast}\leq u-u_{\ast}<d-a_0$ holds on $S'\setminus V_1$. 
Applying Corollary \ref{Hartog lemma 2} again to the sequence $\{u_j\}$ and the compact set $S'\setminus V_1$, one can find an open neighborhood $V_2$ of $S'\setminus V_1$ and $j_2\geq 1$ such that \eqref{est} holds on $V_2$ for each $j\geq j_2$. Then we let $V:=V_1\cup V_2$ and $j_0:=\max\paren{j_1,j_2}$ to complete the proof.
\end{proof}

Now we finish the proof of Theorem \ref{main theorem}. The first inequality in (\ref{inequality 2}) reduces to
\begin{align*}
a_0>(t-1)(A-d)=(t-1)u_*(z_0)=tu_{\ast}(z_0)+d-u(z_0).
\end{align*}
So there exists a number $a_1>0$ such that
\begin{equation}\label{est3}
tu_{\ast}(z_0)+d-a_0<u(z_0)-a_1\leq u_j(z_0)-a_1
\end{equation}
for each $j\geq 1$.  By Lemma~\ref{L:est}, the set
\[
W(j,c):=\{z\in \overline{B'}: w(z)+c<u_j(z)\},\, w:=tu+d-a_0,
\]
 is a relatively compact open set in $B'$ for each $c>0,\,j\geq j_0$. Furthermore, (\ref{liminf}) and (\ref{est3}) imply that the set $W(j,c)$ is nonempty for any $c\in (0,a_1)$ and $j\geq 1$.
Hence $\sup_{B'}\,(u_j-w)>a_1/2$
holds for each $j\geq 1$. 
Then letting $\varep={a_1}/{4}$ in (\ref{sup estimate}) of Theorem \ref{C^0 estimate}, we obtain
\begin{equation}\label{capacity estimate}
A\cdot [\capa(W(j,\frac{a_1}{4}),B')]^{\frac{1}{n}}>\frac{a_1}{4}>0
\end{equation}
 for each $j\geq j_0$. Note also that 
 \[
 W(j,\frac{a_1}{4})\subset \{z\in \overline{B'}: u(z)+(d-a_0+\frac{a_1}{4})<u_j(z)\}
 \]
  for any $j\geq 1$. So by Proposition \ref{decreasing sequence}, we have $\lim_{j\to \infty}\capa(W(j,\frac{a_1}{4}),B')=0$ and this contradicts (\ref{capacity estimate}). Therefore, $\varphi$ is continuous on $X$ as desired.
\hfill $\Box$
\begin{remark}
The arguments above reduce to those in Section 2.4 of \cite{Kolo98} if the underlying set $X$ is a compact K\"{a}hler manifold.
\end{remark}

\section{Potentials on minimal projective varieties of general type}\label{sect. big and nef KX}
\begin{definition}
	Let $\{U_{\alpha}\}$ be an open cover of a complex space $X$. A $\textit{semi-K\"{a}hler}$ $\textit{form}$ on $X$ is a family $\omega=\{\rho_{\alpha}\in \textup{PSH}\cap C^{\infty}(U_{\alpha})\}$ such that each $\rho_{\alpha}-\rho_{\beta}$ is pluriharmonic on $U_{\alpha}\cap U_{\beta}$. A function $u:X\to \mathbb{R}\cup \{-\infty\}$  is  $\omega$-$\textit{plurisubharmonic}$ if $\rho_{\alpha}+u$ is plurisubharmonic on $U_{\alpha}$ for each $\alpha$. Given a locally bounded $\omega$-plurisubharmonic function $u$ on $X$, the $\textit{semi-K\"{a}hler current}$ on $X$ associated with $u$ is defined to be $\omega+dd^cu:=\{\rho_{\alpha}+u\in \textup{PSH}\cap L^{\infty}_{\textup{loc}}(U_{\alpha})\}$.
\end{definition}
We denote by $\psh(\omega)$ the set of $\omega$-plurisubharmonic functions on $X$. Note that a semi-K\"{a}hler current defines a positive current on $X^{\textup{reg}}$.  As before, a well-defined Radon measure $(\omega+dd^cu)^n$ on $X$ can be associated with each $u\in \textup{PSH}(\omega)\cap L^{\infty}_{\textup{loc}}(X)$.

Let $(X,\omega_X)$ and $(Y,\omega_Y)$ be compact normal K\"{a}hler spaces and $\{U_{\alpha}\}$ an open cover of $Y$ such that $\omega_Y=\{\rho'_{\alpha}|_{U_{\alpha}}\}$. If $F:X\to Y$ is a surjective holomorphic map, then the family
\[
F^{\ast}\omega_Y:=\{(\rho_\alpha'\circ F)|_{F^{-1}(U_\alpha)}\}
\]
becomes a semi-K\"{a}hler form $\omega$ on $X$. We also assume that the map
\[
F:X\setminus V\to Y\setminus F(V)
\]
is an isomorphism for some analytic subset $V$ of $X$. Then $n:=\textup{dim}_{\mathbb{C}}X=\textup{dim}_{\mathbb{C}}Y$ and it follows from Remmert's Proper Mapping Theorem that $F(V)$ is an analytic subset of $Y$. In this setting, we show that the assumption on the strict positivity of the base form $\omega$ in Theorem \ref{main theorem} can be weakened  as follows.
\begin{theorem}[cf. \cite{DinewZhang10}]\label{thm CMA semipositive}
	If $f$ is a nonnegative function on $X$ satisfying $f\in L^p(X,\omega^n)$ for some $p>1$, then any solution $u\in  \textup{PSH}(\omega)\cap L^{\infty}(X)$ to the equation 
	\begin{equation}\label{eq CMA eqn semiposi}
		(\omega+dd^cu)^n= f\omega^n~\text{on}~X
	\end{equation}
	is continuous on $X$.
\end{theorem}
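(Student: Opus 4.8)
The plan is to transfer the equation from $X$ to the base $(Y,\omega_Y)$, where the reference form is a genuine K\"ahler form, and then invoke Theorem \ref{main theorem}. Since $F$ restricts to a biholomorphism $X\setminus V\to Y\setminus F(V)$ and $F(V)$ is an analytic subset of $Y$ by Remmert's Proper Mapping Theorem, I would first push $u$ forward by setting $\psi:=u\circ(F|_{X\setminus V})^{-1}$ and $\tilde f:=f\circ(F|_{X\setminus V})^{-1}$ on $Y\setminus F(V)$. Because $\omega=F^{\ast}\omega_Y$, the function $\psi$ is a bounded $\omega_Y$-plurisubharmonic function on $Y\setminus F(V)$, and a change of variables across the Lebesgue-null set $F(V)$ gives $\tilde f\in L^p(Y,\omega_Y^n)$ with $\|\tilde f\|_{L^p}=\|f\|_{L^p}$.

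Next I would extend $\psi$ across the analytic set $F(V)$. Writing $\omega_Y=\{dd^c\rho'_\alpha\}$ locally and applying the standard extension theorem for bounded plurisubharmonic functions across closed pluripolar sets to each $\rho'_\alpha+\psi$ (e.g.\ \cite{Klimek91}), I obtain a function $\tilde\psi\in\textup{PSH}(\omega_Y)\cap L^\infty(Y)$ that agrees with $\psi$ off $F(V)$. The Radon measure $(\omega_Y+dd^c\tilde\psi)^n$ coincides with $\tilde f\omega_Y^n$ on the open set $Y\setminus F(V)$, since $F$ is biholomorphic there and both $dd^c$ and wedge products commute with the pullback. To upgrade this to an identity of measures on all of $Y$, I would argue that neither side charges $F(V)$: the measure $\omega_Y^n$, hence $\tilde f\omega_Y^n$, vanishes on the Lebesgue-null set $F(V)$, while $(\omega_Y+dd^c\tilde\psi)^n$ vanishes on $Y^{\sing}$ by construction and, on $Y^{\reg}$, puts no mass on the pluripolar set $F(V)\cap Y^{\reg}$ because $\tilde\psi$ is bounded (Bedford--Taylor \cite{BedfordandTaylor82}). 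Two finite Radon measures that agree on $Y\setminus F(V)$ and both give zero mass to $F(V)$ are equal, so $\tilde\psi$ solves $(\omega_Y+dd^c\tilde\psi)^n=\tilde f\omega_Y^n$ on $Y$.

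Since $Y$ is a normal, hence reduced and locally irreducible, compact K\"ahler space, Theorem \ref{main theorem} applies to $\tilde\psi$ and yields that $\tilde\psi$ is continuous on $Y$. Finally I would pull the continuity back: $\tilde\psi\circ F$ is a bounded $\omega$-plurisubharmonic function (the composition of the local potential $\rho'_\alpha+\tilde\psi$ with the holomorphic map $F$ being plurisubharmonic on $X$) that equals $u$ on $X\setminus V$. Since $V$ is Lebesgue-null, the two bounded $\omega$-plurisubharmonic functions $u$ and $\tilde\psi\circ F$ agree almost everywhere, hence everywhere; as $\tilde\psi$ and $F$ are continuous, so is $u=\tilde\psi\circ F$.

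The main obstacle is the measure identity across the contracted locus $F(V)$, i.e.\ verifying that the Monge--Amp\`ere mass of the bounded potential $\tilde\psi$ does not concentrate on the analytic set where $F$ fails to be an isomorphism; this is exactly where the boundedness of $u$ and the Bedford--Taylor no-charge property for pluripolar sets are essential, and it is what permits the degenerate form $\omega=F^{\ast}\omega_Y$ to be replaced by the strictly positive $\omega_Y$. A secondary point requiring care is the extension of $\psi$ to a genuinely $\omega_Y$-plurisubharmonic and still bounded function on all of $Y$, which must be carried out through local K\"ahler potentials on the possibly singular space $Y$.
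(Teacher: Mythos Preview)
Your argument is correct and reaches the same conclusion, but by a genuinely different route than the paper. The paper does not push $u$ forward at all: instead it transfers only the density, setting $g:=f\circ F^{-1}$ on $Y^{\textup{reg}}\setminus F(V)$, checks that $g\in L^p(Y,\omega_Y^n)$ and that $\int_Y g\,\omega_Y^n=\int_Y\omega_Y^n$, and then invokes the \emph{existence} theorem (Theorem 6.3 of \cite{EGZ09}) to produce a solution $v\in\textup{PSH}(\omega_Y)\cap L^\infty(Y)$ on the K\"ahler space $(Y,\omega_Y)$. Theorem \ref{main theorem} gives $v\in C^0(Y)$, so $v\circ F$ is a continuous solution of \eqref{eq CMA eqn semiposi}; finally the \emph{uniqueness} of bounded solutions up to constants (Proposition 1.4 of \cite{EGZ09}) forces $u=v\circ F+C$, hence $u$ is continuous. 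Your approach trades these two black boxes (existence on $Y$ and uniqueness on $X$) for two pluripotential facts: extension of bounded quasi-psh functions across the analytic set $F(V)$ on the possibly singular space $Y$, and the Bedford--Taylor no-charge property on pluripolar sets. The paper's route is shorter because those results from \cite{EGZ09} are already available in the required generality; your route is more self-contained in that it never appeals to solvability of a new Monge--Amp\`ere problem, and the identification $u=\tilde\psi\circ F$ is immediate rather than via a uniqueness theorem. The one place where you should be slightly more careful is the extension step on the singular space $Y$: rather than citing \cite{Klimek91} directly, it is cleanest to observe that $u$ is already globally defined on $X$ and constant along the (connected) fibers of the proper bimeromorphic map $F$, so that $\tilde\psi$ descends to all of $Y$ without any extension argument.
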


\begin{proof}
	Note first that $F$ induces a biholomorphic mapping between $X^{\textup{reg}}\setminus V$ and $Y^{\textup{reg}}\setminus F(V)$. So one may define
	\[
	g:=f\circ F^{-1}: Y^{\textup{reg}}\setminus F(V)\to \mathbb{R}.
	\]
	Then we have
	\begin{align*}
		\int_{Y}g^p\omega_Y^n&=\int_{Y^{\textup{reg}}\setminus F(V)}g^p\omega_Y^n=\int_{Y^{\textup{reg}}\setminus F(V)}(f\circ F^{-1})^p\omega^n_{Y}\\
		&=\int_{X^{\textup{reg}}\setminus V}f^p\omega^n= \int_{X}f^p\omega^n<+\infty
	\end{align*}
	since $X^{\textup{sing}}, Y^{\textup{sing}}$, $V$, and $F(V)$ are locally pluripolar. Similar computations also yield
	\begin{align*}
		\int_{Y}g\omega_Y^n&=\int_{X}f\omega^n=\int_{X}\omega^n=\int_{Y}\omega_Y^n.
	\end{align*}
	So by Theorem 6.3 in \cite{EGZ09}, there exists a function $v\in \textup{PSH}(\omega_Y)\cap L^{\infty}(Y)$ satisfying
	\[
	(\omega_Y+dd^cv)^n=g\omega_Y^n
	\]
	on $Y$. Then $v\in C^{0}(Y)$ by Theorem \ref{main theorem} and the pullback of the equation by the biholomorphic map $F:X^{\textup{reg}}\setminus V\to Y^{\textup{reg}}\setminus F(V)$ becomes
	\begin{align*}
		(\omega+dd^c(v\circ F))^n=F^{\ast}(\omega_Y+dd^cv)^n=F^{\ast}(g\omega_Y^n)=f\omega^n~\text{on}~X^{\textup{reg}}\setminus V.
	\end{align*}
 Hence $v\circ F\in C^0(X)$ is a solution to (\ref{eq CMA eqn semiposi}).
	
	If $u$ is a solution to (\ref{eq CMA eqn semiposi}), then it follows from the uniqueness of the solution to (\ref{eq CMA eqn semiposi})  that $u-v\circ F=C$ for some constant $C\in \mathbb{R}$; see Proposition 1.4 in \cite{EGZ09}. Therefore, we conclude that $u$ is continuous on $X$ as desired.
\end{proof}

Let $X$ be a compact ($\mathbb{Q}$-Gorenstein) minimal projective variety of general type with log terminal singularities. Then by \cite{Ka84}, there exists an integer $m\geq 1$ such that $mK_X$ is a Cartier divisor and the linear system $|mK_X|$ induces a holomorphic map $\Phi_m:X\to \mathbb{CP}^{N_m}$. The isomorphism type of the compact normal projective variety $\Phi_m(X)$ is independent of the choice of $m$ and $\textup{dim}_{\mathbb{C}}\Phi_m(X)=\textup{dim}_{\mathbb{C}}X$ holds. The compact normal variety $X_{\textup{can}}:=\Phi_m(X)$ is called the \emph{canonical model} of $X$.

Denote by $\omega_{\textup{FS}}$ the Fubini-Study metric on the complex projective space $\mathbb{CP}^{N_m}$. Then define a semi-K\"{a}hler form on $X$ as
\begin{equation*}\label{eqn: pull back of FS metric}
\omega:=\frac{1}{m}\Phi_{m}^{\ast}\omega_{\textup{FS}}.
\end{equation*}
Note that $\Phi_m$ induces a biholomorphic mapping between $X^{\circ}:=X^{\textup{reg}}\setminus \textup{Exc}(\Phi_{m})$ and $X^{\textup{reg}}_{\textup{can}}\setminus \Phi_m(\textup{Exc}(\Phi_{m}))$, where $\textup{Exc}(\Phi_{m})$ denotes the exceptional locus of $\Phi_{m}$. Therefore, $\omega$ defines a smooth K\"{a}hler metric on the open set $X^{\circ}$. The singular K\"{a}hler-Einstein potential $u\in \textup{PSH}(\omega)\cap L^{\infty}(X)$ generating the smooth canonical metric of negative Ricci curvature on $X^{\circ}$ satisfies (\ref{eq CMA eqn semiposi}); see \cite{Tsuji88, TianZhang06, ST17}. So we obtain the following 

\begin{corollary}\label{cor potential of min. mod.}
	If $X$ is a compact minimal projective variety of general type with log terminal singularities, then the singular K\"{a}hler-Einstein potential generating the smooth canonical metric of negative Ricci curvature on $X^{\circ}$ is continuous on $X$. 
\end{corollary}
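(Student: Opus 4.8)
The plan is to recognize the data $(X,\omega)$ as the pullback of a genuine K\"{a}hler datum under the canonical morphism, and then apply Theorem~\ref{thm CMA semipositive} verbatim. Concretely, I would take $Y:=X_{\textup{can}}$ to be the canonical model, which is a compact normal projective variety and hence carries the K\"{a}hler form $\omega_Y:=\frac{1}{m}\omega_{\textup{FS}}|_{X_{\textup{can}}}$, and let $F:=\Phi_m:X\to X_{\textup{can}}$ be the associated surjective holomorphic map. Factoring $\Phi_m$ through the embedding $X_{\textup{can}}\hookrightarrow \mathbb{CP}^{N_m}$ gives $\omega=\frac{1}{m}\Phi_m^{\ast}\omega_{\textup{FS}}=F^{\ast}\omega_Y$, so $\omega$ is precisely the semi-K\"{a}hler form pulled back along $F$. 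Setting $V:=\textup{Exc}(\Phi_m)$, the morphism $F$ restricts to an isomorphism $X\setminus V\to Y\setminus F(V)$ and contracts the analytic set $V$; thus all of the geometric hypotheses of Theorem~\ref{thm CMA semipositive} are in place.

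It then remains to check the analytic hypothesis, namely that the density $f$ of the Monge-Amp\`{e}re measure lies in $L^p(X,\omega^n)$ for some $p>1$. Here I would use the explicit form of the canonical-metric equation: the potential $u$ generating the smooth K\"{a}hler-Einstein metric of negative Ricci curvature on $X^{\circ}$ satisfies $(\omega+dd^cu)^n=e^{u}\,\Omega$, where $\Omega$ is the adapted measure on $X^{\textup{reg}}$ determined by a local generator of $mK_X$. Consequently the density appearing in \eqref{eq CMA eqn semiposi} is $f=e^{u}\,\Omega/\omega^n$, and since $u\in L^{\infty}(X)$ the factor $e^{u}$ is bounded above and below by positive constants on $X$.

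The crux of the matter---and the step I expect to be the main obstacle---is the $L^p$-integrability of $\Omega/\omega^n$. This is exactly where the log terminal hypothesis on the singularities of $X$ is indispensable: the defining inequality of log terminal singularities guarantees that the adapted measure $\Omega$ has finite total mass and, more quantitatively, that $\Omega/\omega^n\in L^p(X,\omega^n)$ for some $p>1$, which is the standard integrability estimate underlying the K\"{a}hler-Einstein existence theory of \cite{EGZ09}. Combined with the boundedness of $e^{u}$, this yields $f\in L^p(X,\omega^n)$ with $p>1$.

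With all hypotheses verified, Theorem~\ref{thm CMA semipositive} applies directly and shows that $u\in\textup{PSH}(\omega)\cap L^{\infty}(X)$ is continuous on $X$, which is the assertion of the corollary. I would emphasize that no further pluripotential-theoretic input is needed beyond Theorem~\ref{thm CMA semipositive}: the entire content of the corollary reduces to translating the canonical-metric problem into the semipositive Monge-Amp\`{e}re equation on $(X,F^{\ast}\omega_Y)$ and to the log terminal integrability of the adapted measure.
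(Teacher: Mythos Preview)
Your proposal is correct and follows exactly the paper's approach: the paper's argument for the corollary is simply the sentence preceding it, which observes that the singular K\"{a}hler-Einstein potential satisfies \eqref{eq CMA eqn semiposi} (citing \cite{Tsuji88, TianZhang06, ST17} for the $L^p$-integrability of the density) and then invokes Theorem~\ref{thm CMA semipositive} with $F=\Phi_m$ and $Y=X_{\textup{can}}$. Your write-up makes explicit the verification of the hypotheses---in particular the role of the log terminal assumption in securing $f\in L^p(X,\omega^n)$---but the logical structure is identical.
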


\begin{remark}
	It is known that $X_{\textup{can}}$ for $X$ in Corollary \ref{cor potential of min. mod.} is a compact normal projective variety with log terminal singularities and $mK_{X_\textup{can}}$ is an ample Cartier divisor for some integer $m\geq 1$; see Section 1.3 in \cite{Kollar13}. Then by \cite{EGZ09}, $X_{\textup{can}}$ admits the singular K\"{a}hler-Einstein potential $u$ on $X_{\textup{can}}$ that generates the smooth canonical metric of negative Ricci curvature on $X^{\textup{reg}}_{\textup{can}}$. Furthermore, $u\in C^0(X_{\textup{can}})$ by \cite{CGZ13} so one can conclude that the singular K\"{a}hler-Einstein potential $\Phi_m^{\ast}u$ on $X$ is continuous. Although the argument provides a proof of Corollary \ref{cor potential of min. mod.} that is independent of Theorem \ref{thm CMA semipositive}, our proof depends neither on the algebro-geometric properties of $X_{\textup{can}}$ nor on the global smooth approximation property of $X_{\textup{can}}$ \cite{CGZ13} that in turn relies on the homogeneity of the ambient projective space.
\end{remark}
\section{Global approximation of $\omega$-plurisubharmonic functions}\label{section global approx.}
\subsection{Viscosity supersolutions and pluripotential supersolutions}
Let $X$ be a compact K\"{a}hler manifold of complex dimension $n$, $f:X\to \mathbb{R}$ a nonnegative continuous function, and $dV$ a smooth volume form on $X$. We shall assume that $\omega$ is a smooth semipositive closed (1,1)-form on $X$ satisfying $\int_X\omega^n>0$, although the statements in this subsection were formulated more generally in \cite{GLZ19} for $\omega$ in a big cohomology class on $X$ .

\begin{definition}
Let $u:X\to \mathbb{R}$ be a lower semicontinuous function on $X$. A \emph{lower test function} for $u$ at $x_0\in X$ is a function $q:U\to \mathbb{R}$ defined on an open neighborhood $U$ of $x_0$ such that $u\geq q$ on $U$ and $u(x_0)=q(x_0)$. 
\end{definition}
For a real-valued $C^2$ function $q$ on an open subset $U$ of $X$, define
\begin{equation*}
	(\omega+dd^cq)_{+}:=
	\begin{cases}
		\omega+dd^cq~ &\text{if }~ \omega+dd^cq\geq 0~\text{on}~U,\\
		0~&\text{otherwise}.
	\end{cases}
\end{equation*}
\begin{definition}
 A lower semicontinuous function $u:X\to \mathbb{R}$ is a \emph{viscosity supersolution} to the equation
 \begin{equation}\label{eqn. cano. polar.}
(\omega+dd^c\varphi)^n=e^{\varphi}fdV
 \end{equation}
on $X$ if, for each $x_0\in X$ and a $C^2$ lower test function $q$ of $u$ at $x_0$, the inequality
\[
(\omega+dd^cq)_+^n\leq e^qfdV
\]
holds at $x_0$. A function $\varphi\in \textup{PSH}(\omega)\cap L^{\infty}(X)$ is a \emph{pluripotential supersolution} to (\ref{eqn. cano. polar.}) if the Radon measure $(\omega+dd^c\varphi)^n$ satisfies $(\omega+dd^c\varphi)^n\leq e^{\varphi}fdV$ on $X$. 
\end{definition}

If $h:X\to \mathbb{R}$ is a Lebesgue measurable function that is bounded from below on $X$, then the \emph{pluripotential envelope} of $h$ is defined to be
\[
P(h):=(\textup{sup}\{u\in \textup{PSH}(\omega):u\leq h~\text{on}~X\})^{\ast},
\]
where the asterisk denotes the upper semicontinuous regularization of the function. Then we have the following

\begin{theorem}[\cite{GLZ19}]\label{thm viscosity}
If  a bounded lower semicontinuous function $h:X\to \mathbb{R}$ is a viscosity supersolution to \emph{(\ref{eqn. cano. polar.})} on $X$, then $P(h)\in \textup{PSH}(\omega)\cap L^{\infty}(X)$ is a pluripotential supersolution to \emph{(\ref{eqn. cano. polar.})} on $X$.	
\end{theorem}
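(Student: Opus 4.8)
The plan is to reduce the desired pluripotential bound to a viscosity statement and then verify the latter pointwise. First I would record that $P(h)\in\psh(\omega)\cap L^{\infty}(X)$: the family $\mathcal{F}:=\{u\in\psh(\omega):u\le h\}$ is nonempty, since $\omega\ge 0$ makes every constant $\le\inf_X h$ admissible, and it is uniformly bounded above by $\sup_X h<\infty$; hence the upper envelope of $\mathcal{F}$ has an $\omega$-plurisubharmonic, bounded, upper semicontinuous regularization, namely $P(h)$. The key reduction is the Eyssidieux--Guedj--Zeriahi correspondence between the two notions of supersolution: a bounded $\omega$-plurisubharmonic function $\varphi$ satisfies $(\omega+dd^c\varphi)^n\le g\,dV$ in the pluripotential sense if and only if it is a viscosity supersolution of $(\omega+dd^c\varphi)^n=g\,dV$, whenever $g\ge 0$ is continuous. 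Applying this with $g=e^{\varphi}f$, it suffices to prove that $P(h)$ is a \emph{viscosity} supersolution of \eqref{eqn. cano. polar.}.

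Next I would isolate two structural properties of the envelope. Writing $v:=\sup\mathcal{F}$ before regularization, one has $v\le h$ everywhere, so $\{P(h)>h\}\subset\{v^{\ast}>v\}$; the latter is a negligible, hence pluripolar, set, and since the Monge--Amp\`ere measure of a bounded $\omega$-plurisubharmonic function charges no pluripolar set, $(\omega+dd^cP(h))^n$ is carried by $\{P(h)\le h\}$. Moreover, on the open set $\{P(h)<h\}$ (open because $P(h)-h$ is upper semicontinuous, being the sum of the upper semicontinuous $P(h)$ and $-h$) the envelope is maximal: any local upward perturbation of $P(h)$ still lies below $h$, so $P(h)$ solves the homogeneous equation there and $(\omega+dd^cP(h))^n=0$. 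Consequently the measure is concentrated on the contact set $\{P(h)=h\}$.

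With these in hand I would check the viscosity supersolution inequality at an arbitrary $x_0$ with a $C^{2}$ lower test function $q$ of $P(h)$ at $x_0$. If $P(h)(x_0)<h(x_0)$, maximality (equivalently, via the same correspondence applied to the homogeneous equation) forces $(\omega+dd^cq)_+^n(x_0)=0\le e^{q(x_0)}f(x_0)\,dV$ because $f\ge 0$. If $P(h)(x_0)=h(x_0)$, then $q\le P(h)\le h$ off a pluripolar set, with $q(x_0)=h(x_0)$, so $q$ plays the role of a lower test function for $h$ at $x_0$, and the viscosity supersolution property of $h$ yields exactly $(\omega+dd^cq)_+^n(x_0)\le e^{q(x_0)}f(x_0)\,dV$. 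Combining the two cases shows $P(h)$ is a viscosity supersolution, and the reduction of the first paragraph upgrades this to $(\omega+dd^cP(h))^n\le e^{P(h)}f\,dV$, the assertion. The hard part is the contact case: reconciling the pointwise nature of viscosity testing with the fact that $P(h)\le h$ holds only quasi-everywhere, since it may fail on the pluripolar set $\{P(h)>h\}$. Making ``$q$ is a lower test function for $h$'' rigorous there --- so that the exceptional pluripolar locus does not obstruct the pointwise comparison --- is the technical heart, and it is precisely where the lower semicontinuity of $h$ and the EGZ comparison (which is insensitive to pluripolar sets) must be invoked with care.
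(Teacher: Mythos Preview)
The paper does not prove this theorem; it is quoted verbatim from \cite{GLZ19} and used as a black box, so there is no in-paper argument to compare against. Your attempt must therefore stand on its own.

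The reduction via the viscosity/pluripotential correspondence and the maximality argument on the open set $\{P(h)<h\}$ are fine. The gap you flag at the end, however, is genuine and is not closed by the tools you name. First, your dichotomy is incomplete: besides $P(h)(x_0)<h(x_0)$ and $P(h)(x_0)=h(x_0)$ there is the case $P(h)(x_0)>h(x_0)$, which does occur. Take $h\equiv 0$ off a single point $p$ and $h(p)=-1$; the single-point constraint is pluripolar and is erased by the upper regularization, so $P(h)\equiv 0$, while $h(p)=-1$. Then $q\equiv 0$ is a $C^{2}$ lower test for $P(h)$ at $p$ with $q(p)=0>h(p)$, so $q$ is \emph{not} a lower test for $h$ at $p$, and your transfer of the test function fails outright. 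Second, even when $P(h)(x_0)=h(x_0)$, the inclusion $\{q>h\}\subset\{P(h)>h\}$ only tells you that $\{q>h\}$ is pluripolar; since $q-h$ is merely upper semicontinuous, this set need not be open and can be nonempty, so ``$q\le h$ near $x_0$'' does not follow. Neither the lower semicontinuity of $h$ nor the pluripolar-insensitivity of Monge--Amp\`ere measures repairs this, because viscosity testing is pointwise and does not ignore pluripolar sets.

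The argument in \cite{GLZ19} does not try to pass a single test function from $P(h)$ to $h$. It proceeds instead through approximation and a pluripotential obstacle/balayage comparison: one shows $(\omega+dd^{c}P(h))^{n}$ is concentrated on the contact set and is dominated there by the obstacle data, yielding $(\omega+dd^{c}P(h))^{n}\le e^{P(h)}f\,dV$ directly in the pluripotential sense. Closing your route would require either importing that machinery or supplying a separate argument at points of $\{P(h)>h\}$; the example above shows that simply declaring ``$q$ is a lower test for $h$'' cannot be made rigorous there.
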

\subsection{Smooth approximation property}
\begin{definition}
Let $X$ be a compact K\"{a}hler manifold and $\omega$ a smooth semipositive closed (1,1)-form on $X$. We say that $(X,\omega)$ satisfies $\textit{\textup{(continuous}\textup{)} smooth }$ $\textit{approximation property}$ if, for any $\omega$-plurisubharmonic function $\varphi$ on $X$, there exists a sequence of (continuous) smooth $\omega$-plurisubharmonic functions on $X$ that decreases pointwise to $\varphi$ on $X$. 
\end{definition}

 Any solution to (\ref{MA equation}) on a compact K\"{a}hler manifold $X$ is continuous on $X$ whenever $(X,\omega)$ enjoys the continuous approximation property; see Theorem 2.1 in \cite{EGZ09}. A pair $(X,\omega)$ satisfies the smooth approximation property if $\omega$ is K\"{a}hler  \cite{Demailly 92, Blocki Kolo 07} or a proper holomorphic map $\pi:X\to V$ is a resolution of a compact normal projective variety $V$  such that $\omega=\pi^{\ast}\omega_V$ for some Hodge form $\omega_V$ on $V$ \cite{CGZ13}. But it is not known whether a general pair $(X,\omega)$ satisfies the smooth approximation property.
 
 \begin{theorem}[cf. \cite{EGZ15}]
 If $(V,\omega_V)$ is a compact normal K\"{a}hler space and $\pi:X\to V$ is a resolution of $V$, then $(X,\pi^{\ast}\omega_V)$ satisfies the smooth approximation property.
 \end{theorem}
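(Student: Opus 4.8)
The plan is to transfer the problem to the base $(V,\omega_V)$, where $\omega_V$ is a genuine K\"ahler form so that Theorem \ref{main theorem} is available, and to produce the approximants by the plurisubharmonic-envelope method of \cite{GLZ19}. First I would reduce to a bounded potential: an arbitrary $\omega$-psh $\varphi$ (with $\omega:=\pi^*\omega_V$) is the decreasing limit of the bounded functions $\max(\varphi,-k)$, so a diagonal argument reduces the general case to $\varphi\in\psh(\pi^*\omega_V)\cap L^\infty(X)$. Next I would set up the dictionary between $X$ and $V$. By normality of $V$ the fibers of $\pi$ are compact and connected, and since $\pi^*\omega_V$ is a pullback, any local potential $\rho_\alpha\circ\pi$ is constant along a fiber; hence the restriction of any $\chi\in\psh(\pi^*\omega_V)$ to a fiber is psh on a compact connected space, thus constant. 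Therefore $\chi=\hat\chi\circ\pi$ for a unique usc $\hat\chi$ on $V$, and using normality together with \cite{FornaessNarasimhan80} one checks $\hat\chi\in\psh(\omega_V)$; conversely $u\mapsto u\circ\pi$ carries $\psh(\omega_V)$ into $\psh(\pi^*\omega_V)$. This bijection preserves boundedness and continuity, and pulls smooth functions on $V$ back to smooth functions on $X$.

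To get continuous approximants I would choose smooth $G_m\searrow\varphi$ on $X$ and set $\psi_m:=P(G_m)$, the $\pi^*\omega_V$-psh envelope. Then $\varphi\le\psi_m\le G_m$ with $\psi_m\searrow\varphi$, so the only point is the continuity of each $\psi_m$. Since $G_m$ is smooth and bounded it is a viscosity supersolution of $(\pi^*\omega_V+dd^c\cdot)^n=e^{\cdot}f_m\,dV$ for a bounded $f_m\ge 0$, so Theorem \ref{thm viscosity} gives the pluripotential bound $(\pi^*\omega_V+dd^c\psi_m)^n\le C_m\,dV$ on $X$. Pushing this measure forward by $\pi$ and using the correspondence, $\hat\psi_m$ solves $(\omega_V+dd^c\hat\psi_m)^n=\hat f_m\,\omega_V^n$ on $V$, with $\hat f_m\,\omega_V^n\le C_m\,\pi_*dV$. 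The key analytic input is that $\pi_*dV$ has $L^p$ density with respect to $\omega_V^n$ for some $p>1$: pulling back, this reduces to the finiteness of $\int_X J^{1-p}\,dV$ for $J:=(\pi^*\omega_V)^n/dV$, and since $J$ vanishes only to finite order along the simple normal crossing divisor $\pi^{-1}(V^\sing)$, such a $p>1$ exists. Consequently $\hat f_m\in L^p(V,\omega_V^n)$, and Theorem \ref{main theorem} on the K\"ahler space $(V,\omega_V)$ shows that $\hat\psi_m$, hence $\psi_m=\hat\psi_m\circ\pi$, is continuous. This already yields the continuous approximation property for $(X,\pi^*\omega_V)$.

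It then remains to upgrade continuity to smoothness, and for this it suffices to approximate a single \emph{continuous} $\chi\in\psh(\pi^*\omega_V)$ from above by smooth $\pi^*\omega_V$-psh functions and diagonalize against $\psi_m\searrow\varphi$. Away from $E:=\pi^{-1}(V^\sing)$ the form $\pi^*\omega_V$ is K\"ahler, so the regularization results of Demailly and of B\l{}ocki--Ko\l{}odziej apply there; fixing a K\"ahler form $\beta$ on $X$, nefness of the class $\{\pi^*\omega_V\}$ produces, for each $\varepsilon>0$, smooth $(\pi^*\omega_V+\varepsilon\beta)$-psh functions decreasing to $\chi$. The main obstacle is to remove this $\varepsilon\beta$ loss of positivity across $E$, where $\pi^*\omega_V$ degenerates: one must convert the $(\pi^*\omega_V+\varepsilon\beta)$-psh smooth regularizations into genuinely $\pi^*\omega_V$-psh smooth functions. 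I expect to resolve this using the bigness of the nef class $\{\pi^*\omega_V\}$ together with the pullback structure, following the regularization scheme of \cite{EGZ15} and the projective prototype of \cite{CGZ13}; this smoothing transverse to the degeneration locus $E$, rather than the envelope construction itself, is the technically delicate step.

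In summary, the conceptual core is the passage $(X,\pi^*\omega_V)\leftrightarrow(V,\omega_V)$ combined with two transfers: Theorem \ref{thm viscosity} is applied \emph{upstairs} on the manifold $X$ to control the envelope's Monge-Amp\`ere mass, while Theorem \ref{main theorem} is applied \emph{downstairs} on the K\"ahler space $(V,\omega_V)$ to convert that control into continuity. The only genuinely new difficulty beyond these two ingredients and the $L^p$-integrability of $\pi_*dV$ is the final smoothing across the exceptional locus, which I would treat last and expect to be the bottleneck of the argument.
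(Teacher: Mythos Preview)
Your approach is essentially the paper's: take smooth $h_j\searrow\varphi$, form the envelopes $\varphi_j:=P(h_j)$, use Theorem~\ref{thm viscosity} to bound $(\omega+dd^c\varphi_j)^n$ by a fixed volume form, convert this into an $L^p$ density against $\omega^n$ (your $\pi_*dV$ computation is exactly Lemma~3.2 of \cite{EGZ09}), and deduce continuity of $\varphi_j$. The paper packages your push-forward-to-$V$ step as Theorem~\ref{thm CMA semipositive}, whose proof is precisely the descent argument you wrote out; and your preliminary reduction to bounded $\varphi$ is unnecessary, since $h_j$ is smooth on a compact manifold and hence $P(h_j)$ is automatically bounded regardless of $\varphi$.

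The one substantive correction concerns what you call the bottleneck. The passage from continuous $\omega$-psh approximants to smooth ones is \emph{not} the delicate step here: once each $\varphi_j$ is continuous, Richberg's regularization (in the form recorded as Corollary~2.5 of \cite{EGZ15}) directly yields smooth $\omega$-psh functions decreasing to $\varphi_j$, with no loss of positivity to repair, and a diagonal extraction finishes. The paper simply cites this; there is no need to invoke bigness, Kodaira-type tricks, or any special structure of $\pi^{-1}(V^\sing)$ beyond what already went into the $L^p$ estimate. So your outline is correct, but the weight you place on the final smoothing is misplaced: the genuinely new content is exactly the continuity of the envelopes via Theorem~\ref{main theorem} (or its semipositive variant), which you handled correctly.
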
 

\begin{proof}
Let $\omega:=\pi^{\ast}\omega_V$ and choose a function $\varphi\in \textup{PSH}(\omega)$. By Lemma 2.15 in \cite{Demailly 92}, there exists a sequence $\{h_j:X\to \mathbb{R}\}$ of smooth functions decreasing pointwise to $\varphi$ on $X$. Since $\varphi\leq P(h_j)\leq h_j$ holds for every $j\geq 1$, the decreasing sequence $\{\varphi_j:=P(h_j)\}\in \textup{PSH}(\omega)\cap L^{\infty}(X)$ converges pointwise to $\varphi$ on $X$. Then it suffices to show that each $\varphi_j$ is continuous on $X$ to complete the proof.

Let $j\geq 1$ and note that $(\omega+dd^ch_j)_{+}^n=f_jdV$ for some $f_j\in C^0(X)$. If $q:U\to \mathbb{R}$ is a $C^2$ lower test function of $h_j$ at $x_0\in X$, then $h_j-q$ attains a minimum on $U$ at $x_0$. So $dd^c(h_j-q)\geq 0$ at $x_0$ and
\[
(\omega+dd^ch_j)_{+}^n\geq (\omega+dd^cq)_{+}^n
\]
at $x_0$. This implies that $h_j$ is a viscosity supersolution to the equation
\begin{equation}\label{eqn viscosity}
(\omega+dd^c\varphi)^n=e^{\varphi}e^{-h_j}f_jdV
\end{equation} 
on $X$. By Theorem \ref{thm viscosity}, $\varphi_j$ is a pluripotential supersolution to (\ref{eqn viscosity}) on $X$. Hence we obtain $(\omega+dd^c\varphi_j)^n=g'_jdV$ on $X$ for a bounded function $g'_j\in L^{\infty}(X)$. Then it follows from Lemma 3.2 in \cite{EGZ09} that
\[
(\omega+dd^c\varphi_j)^n= g_j\omega^n~\text{on}~X,
\]
where $g_j\in L^p(X,\omega^n)$ for some $p>1$. Therefore each $\varphi_j$ is continuous on $X$ by Theorem \ref{thm CMA semipositive}. Then by Richberg's regularization theorem on complex analytic spaces, $(X,\pi^{\ast}\omega_V)$ satisfies the smooth approximation property; see, for example, Corollary 2.5 in \cite{EGZ15}.
\end{proof}

\section{Potentials of weak K\"{a}hler-Ricci flows}\label{sect. KRF}
\subsection{Weak K\"{a}hler-Ricci flows on K\"{a}hler varieties}
Let $X$ be a compact $\mathbb{Q}$-Gorenstein K\"{a}hler variety with log terminal singularities. Denote by $\mathcal{C}^{\infty}_X$ and $\mathcal{PH}_X$ the sheaf of smooth functions on $X$ and the sheaf of pluriharmonic functions on $X$, respectively. Then the short exact sequence of sheaves
\[
0\longrightarrow \mathcal{PH}_X\longrightarrow \mathcal{C}^{\infty}_X\longrightarrow \mathcal{C}^{\infty}_X/\mathcal{PH}_X\longrightarrow 0
\]
induces the boundary map
\[
 H^0(X,\mathcal{C}^{\infty}_X/\mathcal{PH}_X) \stackrel{[\cdot]}{\longrightarrow}
 H^1(X,\mathcal{PH}_X).
\]
The K\"{a}hler cone $\mathcal{K}$ is defined as
\[
\mathcal{K}:=\{[\omega]\in H^1(X,\mathcal{PH}_X): \omega~\text{is a K\"{a}hler form on}~X\}.
\]
We refer the reader to \cite{EGZ09, GGZ23} for the definition of the first Chern class map $c_1:H^1(X,\mathcal{O}^{\ast}_X)\to H^1(X,\mathcal{PH}_X)$. For a $\mathbb{Q}$-line bundle $L$ on $X$, let $c_1(L):=m^{-1}c_1(mL)$ where $m\geq 1$ is the smallest integer such that $mL$ is a holomorphic line bundle on $X$. Define $c_1(X):=-c_1(K_X)$.
\begin{definition}[\cite{ST17}] \label{def WKRF}
Let $\hat{\omega}_0$ be a K\"{a}hler form on $X$ such that
\[
T_{\textup{max}}:=\sup\{t>0:[\hat{\omega}_0]-tc_1(X)\in \mathcal{K}\}>0
\]
and fix $\varphi_0\in \textup{PSH}(\hat{\omega}_0)\cap L^{\infty}(X)$. A family $\{\omega(t):t\in [0,T_{\textup{max}})\}$ of semi-K\"{a}hler currents on $X$ is said to be a solution to the $\textit{weak K\"{a}hler-Ricci flow}$ starting from $\omega_0:=\hat{\omega}_0+dd^c\varphi_0$ if $\{\omega(t):t\in [0,T_{\textup{max}})\}$ restricts to a smooth path of K\"{a}hler forms on $X^{\textup{reg}}$ satisfying 
	\begin{equation}\label{eqn KRF}
		\begin{dcases}
			\frac{\partial \omega}{\partial t}=-\textup{Ric}(\omega)~~\text{on}~~(0,T_{\textup{max}})\times X^{\textup{reg}},\\[5pt]
			\omega|_{t=0}=\omega_0 ~~\text{on}~~X.
		\end{dcases}
	\end{equation}
\end{definition}

Let $T\in (0,T_{\textup{max}})$ be a positive number and $\hat{\omega}_T$ a K\"{a}hler form on $X$ such that $[\hat{\omega}_T]=[\hat{\omega}_0]-Tc_1(X)$. Define $\chi:=T^{-1}(\hat{\omega}_T-\hat{\omega}_0)$ and choose a smooth hermitian metric $h$ on the $\mathbb{Q}$-line bundle $K_X$ with the curvature form $\chi$. Then consider the adapted measure $\mu$ associated with $h$, and semi-K\"{a}hler forms
\[
\hat{\omega}_t:=\hat{\omega}_0+t\chi=\frac{1}{T}((T-t)\hat{\omega}_0+t\hat{\omega}_T),~t\in [0,T]
\]
on $X$. It is known that there exists a unique function $\varphi:[0,T)\times X\to \mathbb{R}$ satisfying

\begin{enumerate}
	\setlength\itemsep{0.7em}
	\item  $\varphi\in L^{\infty}([0,T]\times X)\cap C^{\infty}((0,T)\times X^{\textup{reg}})$, $\varphi\in \textup{PSH}(\hat{\omega}_t)\cap L^{\infty}(X) $ for each $t\in [0,T)$, and
	\item 
\begin{equation}\label{eqn parabolic CMA}
	\begin{dcases}
		\frac{\partial \varphi}{\partial t}=\textup{log}\,\frac{(\hat{\omega}_t+dd^c\varphi)^n}{ \mu}~~\text{on}~~(0,T)\times X^{\textup{reg}},\\[10pt]
		\varphi|_{t=0}=\varphi_0 ~~\text{on}~~X.
	\end{dcases}
\end{equation}
\end{enumerate}
Then $\omega(t):=\hat{\omega}_t+dd^c\varphi$ is a unique solution to the weak K\"{a}hler-Ricci flow starting from $\omega_0$ (see p.235 of \cite{Boucksom Guedj 13}). We shall recapitulate the construction of the solution \cite{Boucksom Guedj 13, ST17, GLZ20} for the sake of completeness.

Note first that it suffices to construct the solution on $[0,T)\times X$ for any given $T\in (0,T_{\textup{max}})$. Let $\pi:\tilde{X}\to X$ be a resolution of $X$, $dV$ a volume form on $\tilde{X}$, and $\tilde{\omega}$ a K\"{a}hler form on $\tilde{X}$. Define $\tilde{\mu}:=\pi^{\ast}\mu$, $\tilde{\omega}_t:=\pi^{\ast}\hat{\omega}_t$. Since $X$ has log terminal singularities, one may choose a number $p>1$ such that $\tilde{\mu}=\tilde{g}dV$ for some $\tilde{g}\in L^p(dV)$. Then we approximate
\begin{enumerate}
	\setlength\itemsep{0.1em}
	\item  the semipositive form $\tilde{\omega}_t$ by a sequence $\{\tilde{\omega}_j(t)=\tilde{\omega}_t+2^{-j}\tilde{\omega}:j\geq 1\}$ of K\"{a}hler forms on $\tilde{X}$ for each $t\in [0,T)$,
	\item $\tilde{g}\geq 0$ by a sequence $\{\tilde{g}_j:\tilde{X}\to \mathbb{R}\}$ of strictly positive smooth functions on $\tilde{X}$ that converges to $\tilde{g}$ in $L^p(dV)$, and
	\item $\tilde{\varphi}_0\in \textup{PSH}(\tilde{\omega}_0)$ by a decreasing sequence $\{\tilde{\varphi}_{0,j}\in \textup{SPSH}(\tilde{\omega}_j(0))\cap C^{\infty}(\tilde{X})\}$ (Lemma 2.15 in \cite{Demailly 92}).
\end{enumerate}

By Theorem 4.5.1 in \cite{Boucksom Guedj 13}, there exists a unique smooth function $\tilde{\varphi}_j:[0,T)\times \tilde{X}\to \mathbb{R}$ for each $j\geq 1$ such that $\tilde{\varphi}_{j}(t,\cdot)\in \textup{SPSH}(\tilde{\omega}_j(t))$ for any $t\in [0,T)$ and each $\tilde{\varphi}_j$ satisfies
\begin{equation}\label{eqn WKRF smooth approximants}
	\begin{dcases}
		\frac{\partial \tilde{\varphi}_{j}}{\partial t}=\textup{log}\,\frac{(\tilde{\omega}_{j}+dd^c\tilde{\varphi}_{j})^n}{ \tilde{g}_jdV}~\text{on}~[0,T)\times \tilde{X},\\[10pt]
		\tilde{\varphi}_j|_{t=0}=\tilde{\varphi}_{0,j}~\text{on}~\tilde{X}.
	\end{dcases}
\end{equation}
Since $\hat{\omega}_T$ is a K\"{a}hler form on $X$, $\tilde{\omega}_T\geq c \tilde{\omega}_0$ holds on $\tilde{X}$ for some constant $c\in (0,1)$. So we have
\begin{equation}\label{ineq. big form}
\tilde{\omega}_t=\tilde{\omega}_0+\frac{t}{T}(\tilde{\omega}_T-\tilde{\omega}_0)\geq \tilde{\omega}_0-(1-c)\frac{t}{T}\tilde{\omega}_0\geq c\tilde{\omega}_0
\end{equation}
for each $t\in [0,T)$. Then the a priori estimates in \cite{GLZ20} together with (\ref{ineq. big form}) imply that the sequence $\{\tilde{\varphi}_j\}$ converges locally uniformly to a function $\tilde{\varphi}:[0,T)\times \tilde{X} \to \mathbb{R}$ which is a unique weak solution to (\ref{eqn parabolic CMA}); see Theorem A and Theorem B in \cite{GLZ20}. Furthermore,  $\tilde{\varphi}$ is smooth on $(0,T)\times \tilde{X}^{\textup{reg}}$ by Theorem 4.1.4 and Lemma 4.4.4 in \cite{Boucksom Guedj 13}. Since $X$ is normal and $\tilde{\varphi}(t,\cdot)\in \textup{PSH}(\tilde{\omega}_t)$ for each $t\in [0,T)$, there exists a function $\varphi:[0,T)\times X\to \mathbb{R}$ such that $\varphi(t,\cdot)\in \textup{PSH}(\hat{\omega}_t)$ for any $t\in [0,T)$ and $\varphi\circ \pi = \tilde{\varphi}$ on $[0,T)\times \tilde{X}$; see the proof of Theorem 6.3 in \cite{EGZ09}. Then $\varphi$ is the desired solution.

Let $[\omega]\in H^{1,1}(X,\mathbb{R})$ be a big cohomology class on a compact K\"{a}hler manifold $X$. Then by definition, there exists a strictly positive current $T\in [\omega]$ with analytic singularities on $X$ that is smooth on a Zariski open set $\Omega$ in $X$. The $\textit{ample locus}$ $\textup{Amp}(\omega)$ of $[\omega]$ is defined to be the largest such Zariski open subset of $X$.
\begin{remark}\label{remark smoothness of WKRF}
	If $\hat{\omega}_0\in c_1([H])$ for a big, semiample $\mathbb{Q}$-Cartier divisor $H$ on a compact $\mathbb{Q}$-Gorenstein K\"{a}hler variety $X$ with log terminal singularities, then the existence of a unique weak solution to (\ref{eqn parabolic CMA}) can also be guaranteed as above. Furthermore, $\tilde{\varphi}$ is smooth on $(0,T)\times \textup{Amp}(\tilde{\omega}_0)$ by (\ref{ineq. big form}) and Lemma 4.4.4 in \cite{Boucksom Guedj 13}. If one assumes further that $X$ is a $\mathbb{Q}$-factorial compact projective variety, then by Kodaira's lemma (p.29 of \cite{Deb01}), there exists an effective $\mathbb{Q}$-Cartier divisor $\tilde{E}\subset \tilde{X}$ whose support is $\pi^{-1}(X^{\textup{sing}})$ such that the $\mathbb{Q}$-Cartier divisor $m\pi^{\ast}H-\tilde{E}$ is ample for any sufficiently large integer $m\geq 1$. So given a hermitian metric $h$ of the $\mathbb{Q}$-line bundle $[\tilde{E}]$, there exist positive numbers $c,\epsilon >0$ such that
	\[
	\tilde{\omega}_0+\epsilon dd^c\textup{log}\,h\geq c\tilde{\omega}~\text{on}~\pi^{-1}(X^{\textup{reg}}).
	\]
    Hence $\textup{Amp}(\tilde{\omega}_0)\supset \pi^{-1}(X^{\textup{reg}})$ holds and the weak solution to (\ref{eqn parabolic CMA}) is smooth on $(0,T_{\textup{max}})\times X^{\textup{reg}}$. The solution coincides with the one constructed in \cite{ST17} by Theorem 3.3 in \cite{ST17}.
\end{remark}
 \subsection{Lipschitz estimate for parabolic complex Monge-Amp\`{e}re equations.}
Let $(\tilde{X},\tilde{\omega})$ be a compact K\"{a}hler manifold and $T_{\textup{max}}>0$ a positive number. Choose a smooth family $\{\tilde{\omega}_t:t\in [0,T_{\textup{max}}]\}$ of K\"{a}hler forms on $\tilde{X}$ satisfying
\begin{equation*}\label{form comparison}
	\theta\leq \tilde{\omega}_t \leq \Theta ~\text{on}~[0,T_{\textup{max}}]\times \tilde{X},
\end{equation*}
where $\theta$ is a smooth closed semipositive (1,1)-form on $\tilde{X}$ whose cohomology class is big and $\Theta$ is a K\"{a}hler form on $\tilde{X}$. We also assume that $\tilde{\omega}_t=\tilde{\omega}_0+t\cdot \tilde{\chi}$ on $[0,T_{\textup{max}}]\times \tilde{X}$ for a smooth closed (1,1)-form $\tilde{\chi}$ on $\tilde{X}$.

\begin{proposition}[\cite{GLZ20}]\label{thm Lipschitz est.}
	Let $T\in (0,T_{\textup{max}}),~p>1$ be positive numbers and $\mu$ a finite measure on $\tilde{X}$ having smooth density $\tilde{g}>0$ with respect to a volume form $dV$ on $\tilde{X}$. If a smooth function $\tilde{\varphi}:[0,T]\times \tilde{X} \to \mathbb{R}$ satisfies $\tilde{\varphi}(t,\cdot)\in \textup{SPSH}(\tilde{\omega}_t)$ for each $t\in [0,T]$ and
	\begin{equation*}\label{eqn parabolic CMA 2}
		\frac{\partial \tilde{\varphi}}{\partial t}=\textup{log}\,\frac{(\tilde{\omega}_t+dd^c\tilde{\varphi})^n}{ \tilde{g}dV}~~\text{on}~~(0,T)\times \tilde{X},
	\end{equation*}
	then there exists a uniform constant $C$ depending only on $T, \,\theta,\, \Theta,\, \|\tilde{\varphi}(0,\cdot)\|_{L^{\infty}(\tilde{X})},$ and $\|\tilde{g}\|_{L^p(dV)}$ such that
	\begin{equation}\label{Lipschitz est}
		n\,\textup{log}\,t-C\leq \frac{\partial \tilde{\varphi}}{\partial t}(t,z)\leq \frac{C}{t}~\text{on}~(0,T)\times \tilde{X}.
	\end{equation}
\end{proposition}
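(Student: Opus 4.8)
The plan is to bound the time derivative $\partial_t\tilde{\varphi}$ by the parabolic maximum principle applied to two auxiliary quantities, after differentiating the equation once in time. Writing $\omega_{\tilde{\varphi}}:=\tilde{\omega}_t+dd^c\tilde{\varphi}>0$ and letting $\Delta_t$ denote the complex Laplacian of $\omega_{\tilde{\varphi}}$, differentiation of $\partial_t\tilde{\varphi}=\log\big((\tilde{\omega}_t+dd^c\tilde{\varphi})^n/(\tilde{g}\,dV)\big)$ in $t$, together with $\partial_t\tilde{\omega}_t=\tilde{\chi}$, gives
\[
(\partial_t-\Delta_t)\,\partial_t\tilde{\varphi}=\textup{tr}_{\omega_{\tilde{\varphi}}}\tilde{\chi},
\]
and I shall repeatedly use the pointwise identity $\Delta_t\tilde{\varphi}=n-\textup{tr}_{\omega_{\tilde{\varphi}}}\tilde{\omega}_t$. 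A preliminary input is the uniform zeroth-order estimate $\|\tilde{\varphi}\|_{L^{\infty}([0,T]\times\tilde{X})}\le C_0$ with $C_0$ depending only on the data in the statement; this is the Ko\l{}odziej-type $L^{\infty}$ bound for the equation \cite{Kolo98, EGZ09, GLZ20}, uniform in the degeneracy of $\theta$ precisely because $\|\tilde{g}\|_{L^p(dV)}$ dominates the right-hand side through a capacity inequality of the form in Lemma~\ref{cap. dom. lemma}.

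For the upper bound I would run the maximum principle on $G:=t\,\partial_t\tilde{\varphi}-\tilde{\varphi}$. A direct computation using the two facts above yields
\[
(\partial_t-\Delta_t)G=t\,\textup{tr}_{\omega_{\tilde{\varphi}}}\tilde{\chi}+\Delta_t\tilde{\varphi}=n-\textup{tr}_{\omega_{\tilde{\varphi}}}\tilde{\omega}_0\le n,
\]
since $\tilde{\omega}_0\ge\theta\ge0$ forces $\textup{tr}_{\omega_{\tilde{\varphi}}}\tilde{\omega}_0\ge0$. Hence $G-nt$ is a subsolution of $\partial_t-\Delta_t$, so its maximum over $[0,T]\times\tilde{X}$ is attained at $t=0$, where $G-nt$ equals $-\tilde{\varphi}(0,\cdot)$. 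Therefore $t\,\partial_t\tilde{\varphi}\le\tilde{\varphi}+nt-\tilde{\varphi}(0,\cdot)\le 2C_0+nT$, which gives the asserted bound $\partial_t\tilde{\varphi}\le C/t$ with $C=2C_0+nT$.

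The lower bound $\partial_t\tilde{\varphi}\ge n\log t-C$ is the substantive part, and it is where the mere semipositivity of $\theta$ (bigness without strict positivity) is felt: where $\theta$ degenerates, $\omega_{\tilde{\varphi}}^n$ can be small, so $\partial_t\tilde{\varphi}$ is \emph{not} bounded below uniformly and the estimate must capture the exact blow-down rate $n\log t$ as $t\to0^+$. Accordingly I would work with $\Psi:=\partial_t\tilde{\varphi}-n\log t$, which stays bounded as $t\to0^+$. The geometric source of the rate is the interpolation bound $\tilde{\omega}_t=(1-t/T_{\textup{max}})\tilde{\omega}_0+(t/T_{\textup{max}})\tilde{\omega}_{T_{\textup{max}}}\ge (t/T_{\textup{max}})\tilde{\omega}_{T_{\textup{max}}}$, valid because $\tilde{\omega}_0\ge0$ and $\tilde{\omega}_t$ depends affinely on $t$; it degenerates like $t$ as $t\to0^+$ and is responsible for a factor $t^n$ in $\omega_{\tilde{\varphi}}^n$, hence for $n\log t$ in $\partial_t\tilde{\varphi}$. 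To make this rigorous I would compare $\Psi$ against a stationary barrier: using Ko\l{}odziej's theorem, solve an auxiliary elliptic Monge--Amp\`{e}re equation with right-hand side proportional to $\tilde{g}\,dV$ (say $(\Theta+dd^c\rho)^n=c_0\,\tilde{g}\,dV$) for a bounded potential $\rho$ whose norm is controlled by $\|\tilde{g}\|_{L^p}$, and feed $\rho$, the upper bound $\partial_t\tilde{\varphi}\le C/t$, and the coercivity $\textup{tr}_{\omega_{\tilde{\varphi}}}\tilde{\chi}\ge -T_{\textup{max}}^{-1}\,\textup{tr}_{\omega_{\tilde{\varphi}}}\tilde{\omega}_0$ (which follows from $\tilde{\omega}_{T_{\textup{max}}}=\tilde{\omega}_0+T_{\textup{max}}\tilde{\chi}\ge0$ by taking traces) into the evolution of a suitable combination of $\Psi$, $\tilde{\varphi}$, and $\rho$.

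I expect this lower bound to be the main obstacle, for two reasons: the elliptic $L^{\infty}$ estimate must be uniform in the degeneracy of $\theta$ (again traceable to the $L^p$-hypothesis on $\tilde{g}$ and a capacity/volume comparison as in Lemma~\ref{cap. dom. lemma}), and the maximum principle has to be organized around the singular time $t=0$, where $\partial_t\tilde{\varphi}\to-\infty$ while $\Psi$ remains finite, so that no control is available at the initial slice and the barrier must supply it instead. Once $\Psi\ge -C$ is established, it combines with the upper bound to give \eqref{Lipschitz est}, and I would follow \cite{GLZ20} for the detailed execution of the barrier comparison.
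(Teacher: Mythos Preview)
The paper does not provide its own proof of this proposition: it is stated with the attribution \cite{GLZ20} and used thereafter as a black box, so there is no in-paper argument to compare against.

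On the merits of your sketch: the upper bound is correct and is the standard Tian--Zhang/Song--Tian computation. The identity $(\partial_t-\Delta_t)(t\,\partial_t\tilde{\varphi}-\tilde{\varphi})=n-\textup{tr}_{\omega_{\tilde{\varphi}}}\tilde{\omega}_0\le n$ is exactly right, and together with the uniform $L^{\infty}$ bound it gives $\partial_t\tilde{\varphi}\le C/t$. The lower bound, however, is only an outline. You correctly isolate the difficulties (no initial control since $\partial_t\tilde\varphi\to-\infty$ as $t\to0^+$; uniformity in the degeneration toward $\theta$) and name the right ingredients (an elliptic barrier $\rho$ with $(\Theta+dd^c\rho)^n=c_0\tilde g\,dV$, the interpolation $\tilde\omega_t\ge(t/T_{\textup{max}})\tilde\omega_{T_{\textup{max}}}$), but you do not actually assemble a test function and carry out the maximum principle. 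Since your last sentence explicitly defers to \cite{GLZ20} for ``the detailed execution of the barrier comparison'', the proposal ultimately rests on the same reference the paper cites and does not stand as an independent proof. If you wish to complete it, the missing concrete step is to form a combination such as $H:=\partial_t\tilde\varphi+A(\tilde\varphi-\rho)-n\log t$ for a suitable $A>0$, compute $(\partial_t-\Delta_t)H$, and at an interior minimum use the arithmetic--geometric mean inequality $\textup{tr}_{\omega_{\tilde\varphi}}\alpha\ge n\,(\alpha^n/\omega_{\tilde\varphi}^n)^{1/n}$ with $\alpha$ a fixed K\"ahler form to absorb the trace terms and close the estimate.
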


 Then it follows from the proposition that each $\tilde{\varphi}_{j}$ in (\ref{eqn WKRF smooth approximants}) satisfies (\ref{Lipschitz est})  for some uniform constant $C>0$ independent of $j$. By the aforementioned construction of the potential $\varphi$ of the solution to the weak K\"{a}hler-Ricci flow, $\varphi$ is locally uniformly Lipschitz on $(0,T)\times X$ and there exists a constant $D>0$ such that
 \begin{equation}\label{ineq. supersol.}
(\hat{\omega}_t+dd^c\varphi)^n\leq e^{D}\mu
 \end{equation}
on $X$ for each $t\in (\frac{T}{2},T)$. Note also that the function
\begin{equation}\label{aux. function}
	t\in (0,T) \to \Phi(t,z):=\varphi(t,z)-n(t\,\textup{log}\,t-t)+Ct
\end{equation}
is nondecreasing for each $z\in X$.

\subsection{Continuity of potentials of weak K\"{a}hler-Ricci flows.}
By Proposition 3.12 in \cite{GLZ20}, the potential of the unique solution to the weak K\"{a}hler-Ricci flow on $X$ is continuous on  $(0,T_{\textup{max}})\times X^{\textup{reg}}$. We shall improve the regularity of $\varphi$ in the following form.

\begin{theorem}\label{thm KRF potential conti}
	The potential $\varphi:[0,T_{\textup{max}})\times X\to \mathbb{R}$ of the unique solution to the weak K\"{a}hler-Ricci flow on $X$ starting from $\omega_0=\hat{\omega}_0+dd^c\varphi_0$ is continuous on $(0,T_{\textup{max}})\times X$. If $\varphi_0$ is continuous on $X$, then $\varphi$ is continuous on $[0,T_{\textup{max}})\times X$.
\end{theorem}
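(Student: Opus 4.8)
The plan is to reduce, at each fixed positive time, to the elliptic Theorem~\ref{main theorem}, and then to propagate the resulting spatial continuity in the time variable by means of the parabolic Lipschitz estimate. First I would fix $t_0\in(0,T_{\textup{max}})$ and choose the auxiliary parameter $T$ so that $t_0\in(\frac{T}{2},T)\subset(0,T_{\textup{max}})$. The crucial observation is that for $t\in(0,T)$ the form $\hat\omega_t=\frac{1}{T}\paren{(T-t)\hat\omega_0+t\hat\omega_T}$ is a \emph{genuine} K\"ahler form on $X$, being a positive convex combination of the K\"ahler forms $\hat\omega_0$ and $\hat\omega_T$; thus $(X,\hat\omega_{t_0})$ is a reduced, locally irreducible (as $X$ is normal) compact K\"ahler space, exactly the setting of Theorem~\ref{main theorem}. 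This is the point where working on $X$ itself, rather than on the resolution where $\pi^\ast\hat\omega_t$ is only semipositive, is essential.

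Next I would verify that the time-slice $\varphi(t_0,\cdot)$ solves an elliptic Monge-Amp\`ere equation with $L^p$ density. By \eqref{ineq. supersol.} the slice measure satisfies $(\hat\omega_{t_0}+dd^c\varphi(t_0,\cdot))^n\leq e^{D}\mu$, and since $X$ has log terminal singularities the adapted measure $\mu$ has $L^p$ density, $p>1$, with respect to any smooth K\"ahler volume form on $X$, in particular with respect to $\hat\omega_{t_0}^n$ (cf.\ \cite{EGZ09}). Writing $(\hat\omega_{t_0}+dd^c\varphi(t_0,\cdot))^n=f_{t_0}\,\hat\omega_{t_0}^n$, the domination gives $0\leq f_{t_0}\in L^p$; as $\varphi(t_0,\cdot)\in\psh(\hat\omega_{t_0})\cap L^\infty(X)$, Theorem~\ref{main theorem} yields $\varphi(t_0,\cdot)\in C^0(X)$. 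To pass to joint continuity on $(0,T_{\textup{max}})\times X$, I use that the bounds \eqref{Lipschitz est} make $t\mapsto\varphi(t,x)$ uniformly Lipschitz on each compact interval $[a,b]\subset(0,T_{\textup{max}})$, with a constant independent of $x\in X^{\reg}$ (integrate $\partial_t\varphi$, which is controlled by $\max(|n\log a-C|,C/a)$ there); by density of $X^{\reg}$ and the spatial continuity just proved, this Lipschitz bound extends to all $x\in X$. Continuity in $x$ for each fixed $t$ together with equi-Lipschitz continuity in $t$ (uniform in $x$) then gives joint continuity by a routine $\varepsilon/2$ estimate.

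For the second assertion, assume $\varphi_0\in C^0(X)$ and set $h(t):=n(t\log t-t)-Ct$, so that by \eqref{aux. function} the quantity $\Phi(t,z)=\varphi(t,z)-h(t)$ is nondecreasing in $t$. Hence $\psi(z):=\lim_{t\to 0^+}\Phi(t,z)=\inf_{t>0}\Phi(t,z)$ exists, and since $h(t)\to 0$ one has $\varphi(t,\cdot)\to\psi$ pointwise as $t\to 0^+$. I would then identify $\psi$ with $\varphi_0$: the function $\psi$ is a decreasing limit of the $\hat\omega_t$-plurisubharmonic functions $\Phi(t,\cdot)$ with $\hat\omega_t\to\hat\omega_0$, so $\psi\in\psh(\hat\omega_0)\cap L^\infty(X)$; on the other hand the weak solution attains its initial datum, $\varphi(t,\cdot)\to\varphi_0$ in $L^1(X)$ \cite{GLZ20}, forcing $\psi=\varphi_0$ almost everywhere. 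Two bounded $\hat\omega_0$-plurisubharmonic functions agreeing a.e.\ agree everywhere, whence $\psi=\varphi_0$ on $X$.

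It remains to upgrade this to uniform convergence. Each $\Phi(t,\cdot)$ is continuous by the first part and decreases to the continuous function $\varphi_0$ on the compact space $X$, so Dini's theorem gives $\Phi(t,\cdot)\to\varphi_0$, and hence $\varphi(t,\cdot)\to\varphi_0$, uniformly on $X$ as $t\to 0^+$. Together with the continuity of $\varphi_0$ and of the slices, this uniform convergence yields continuity of $\varphi$ at every point of $\{0\}\times X$, completing the proof on $[0,T_{\textup{max}})\times X$. I expect the behavior at $t=0$ to be the main obstacle: there the Lipschitz estimate \eqref{Lipschitz est} degenerates and the uniform-in-$x$ time-Lipschitz bound is lost, so one must replace it by the monotone quantity $\Phi$ and by the identification $\psi=\varphi_0$ (through the $\hat\omega_0$-plurisubharmonicity of the limit and the attainment of the initial datum). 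By contrast, the content on $(0,T_{\textup{max}})\times X$ is a rather direct consequence of Theorem~\ref{main theorem}, once one notices that each time-slice is an elliptic Monge-Amp\`ere solution on the genuine K\"ahler space $(X,\hat\omega_t)$.
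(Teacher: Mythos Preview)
Your proof is correct and follows essentially the same approach as the paper's: continuity of each time slice via Theorem~\ref{main theorem} applied on the genuine K\"ahler space $(X,\hat\omega_t)$ using \eqref{ineq. supersol.} and the log terminal hypothesis, joint continuity from the uniform time-Lipschitz bound \eqref{Lipschitz est}, and the boundary behavior at $t=0$ via the monotone quantity $\Phi$ together with Dini's theorem. The only minor difference is that the paper obtains the pointwise convergence $\varphi(t,\cdot)\to\varphi_0$ directly from Theorem~A of \cite{GLZ20}, whereas you re-derive it from the $L^1$ attainment of the initial datum plus the a.e.\ uniqueness of bounded $\hat\omega_0$-plurisubharmonic functions; both routes are valid and lead to the same conclusion.
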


\begin{proof}
Let $\varphi$ be the solution to (\ref{eqn parabolic CMA}) and choose a positive number $T\in (0,T_{\textup{max}})$. For each $t\in (0,T_{\textup{max}})$, there exists a number $q=q(t)>1$ such that 
\[
\frac{\mu}{\hat{\omega}_t^n}\in L^q(X,\hat{\omega}_t^n)
\]
as $X$ has log terminal singularities. Then by Theorem \ref{main theorem} and (\ref{ineq. supersol.}), the function $\varphi(t,\cdot)\in \textup{PSH}(\hat{\omega}_t)\cap L^{\infty}(X)$ is continuous on $X$ for any $t\in (0,T_{\textup{max}})$. Fix $(t_0,z_0)\in (0,T)\times X$ and let $(t,z)\in (0,T)\times X$. Since $\varphi(\cdot,z)$ is locally uniformly Lipschitz on $(0,T)$, we have
\begin{equation}\label{triangle inequaility}
	|\varphi(t,z)-\varphi(t_0,z_0)|\leq |\varphi(t,z)-\varphi(t_0,z)|+|\varphi(t_0,z)-\varphi(t_0,z_0)|\to 0
\end{equation}
if $(t,z)\to (t_0,z_0)$. Hence $\varphi$ is continuous on $(0,T)\times X$. This implies that $\varphi$ is continuous on $(0,T_{\textup{max}})\times X$ as $T\in (0,T_{\textup{max}})$ is arbitrary.

Suppose that $\varphi_0$ is continuous on $X$ and let $\Phi$ be the function in (\ref{aux. function}). Then by Theorem A in \cite{GLZ20}, we have
\[
\lim_{t\to 0^{+}}\Phi(t,z)=\lim_{t\to 0^{+}}\varphi(t,z)=\varphi_0(z)
\]
for each $z\in X$ and the convergence of $\Phi(\cdot,z)$ is uniform by Dini's theorem. So one may apply (\ref{triangle inequaility}) to $\Phi$ and conclude that $\Phi$ is continuous on $[0,T_{\textup{max}})\times X$. Therefore $\varphi$ is continuous on $[0,T_{\textup{max}})\times X$.
\end{proof}

Now we consider the weak K\"{a}hler-Ricci flow introduced in \cite{ST17}. Let $X$ be a $\mathbb{Q}$-factorial compact projective variety with log terminal singularities
and $H$ a big, semiample $\mathbb{Q}$-Cartier divisor on $X$. Then there exists an integer $m\geq 1 $ such that $mH$ is a Cartier divisor on $X$ and the linear system $|mH|$ induces a holomorphic map $\Phi_{mH}:X\to \mathbb{CP}^{N_m}$. Define a semi-K\"{a}hler form on $X$ as 
\[
\omega_{H}:=\frac{1}{m}(\Phi_{mH})^{\ast}\omega_{\textup{FS}}
\] 
and let $\mu$ be an adapted measure on $X$. For each $p\in (1,\infty]$, define
\begin{equation*}
	\textup{PSH}_p(X,\omega_H,\mu):=\bigg\{\varphi\in \textup{PSH}(\omega_H)\cap L^{\infty}(X):\frac{(\omega_H+dd^c\varphi)^n}{\mu}\in L^p(X,\mu)\bigg\},
\end{equation*}
\begin{equation*}
	\mathcal{K}_{H,p}(X):=\{\omega_{H}+dd^c\varphi:\varphi\in \textup{PSH}_p(X,\omega_H,\mu)\}.
\end{equation*}
By the arguments in the previous subsection, there exists a unique solution to the weak K\"{a}hler-Ricci flow on $X$ starting from a given current in $\mathcal{K}_{H,p}(X)$ (cf. Theorem 1.1 in \cite{ST17}). We may proceed as in the proof of Theorem \ref{thm KRF potential conti} to conclude that the potential of the solution to the flow is continuous on $[0,T_{\textup{max}})\times X$ once we have the following
\begin{lemma}\label{Lemma Holder}
	$\textup{PSH}_p(X,\omega_H,\mu)\subset C^0(X)$ for each $p\in (1,\infty]$.
\end{lemma}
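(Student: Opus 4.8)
The plan is to realize the pair $(X,\omega_H)$ as an instance of the situation treated in Theorem \ref{thm CMA semipositive} and then to check that every $\varphi\in\textup{PSH}_p(X,\omega_H,\mu)$ solves a complex Monge-Amp\`ere equation whose density lies in $L^{p'}(X,\omega_H^n)$ for some $p'>1$. First I would set up the geometry exactly as in Section \ref{sect. big and nef KX}: since $H$ is big and semiample, for a suitable $m$ the morphism $F:=\Phi_{mH}$ maps $X$ onto a compact normal projective variety $Y:=\Phi_{mH}(X)$ (the ample model of $H$), the map $F$ is birational and hence an isomorphism $F:X\setminus V\to Y\setminus F(V)$ off the analytic set $V:=\textup{Exc}(F)$, and $\omega_H=F^{\ast}\omega_Y$ for the K\"ahler form $\omega_Y:=\frac1m\omega_{\textup{FS}}|_Y$ on $Y$. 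This is precisely the configuration required by Theorem \ref{thm CMA semipositive}; the essential gain is that on $Y$ the form $\omega_Y$ is genuinely K\"ahler, so $\omega_Y^n$ is non-degenerate, which is what makes the density estimate below clean.

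Next, fix $\varphi\in\textup{PSH}_p(X,\omega_H,\mu)$ and write $(\omega_H+dd^c\varphi)^n=g\,\mu$ with $g\in L^p(X,\mu)$. The goal is to produce $f\in L^{p'}(X,\omega_H^n)$ with $(\omega_H+dd^c\varphi)^n=f\,\omega_H^n$ and $p'>1$. Since $\omega_H^n$ does not charge $V$, one has $\int_X f^{p'}\omega_H^n=\int_{X\setminus V}f^{p'}\omega_H^n$, and $F$ is an isomorphism there, so I would transport everything to $Y$: writing $\Psi_Y:=d(F_{\ast}\mu)/d\omega_Y^n$ on $Y^{\textup{reg}}\setminus F(V)$, one gets $f\circ F^{-1}=(g\circ F^{-1})\,\Psi_Y$, with the two inputs being $g\circ F^{-1}\in L^p(Y,F_{\ast}\mu)$ (that is, $(g\circ F^{-1})^p\Psi_Y\in L^1(\omega_Y^n)$) together with the log terminal integrability $\Psi_Y\in L^{q_0}(Y,\omega_Y^n)$ for some $q_0>1$. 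The latter is the standard fact that the density of an adapted measure on a log terminal variety is $L^{1+\varepsilon}$ against a K\"ahler volume form, the same input used for $\tilde g$ in Section \ref{sect. KRF}, transported across the birational morphism $F$.

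The rest is a H\"older interpolation: writing $(g\circ F^{-1})^{p'}\Psi_Y^{p'}=\big[(g\circ F^{-1})^p\Psi_Y\big]^{p'/p}\,\Psi_Y^{p'/q}$ with $q=p/(p-1)$ and applying H\"older with conjugate exponents $s=p/p'$ and $s'=p/(p-p')$, the first factor is integrable because $(g\circ F^{-1})^p\Psi_Y\in L^1$, while the second requires $(p'/q)s'\le q_0$; as $(p'/q)s'\to 1<q_0$ when $p'\to1^+$, this holds for $p'>1$ close enough to $1$ (the case $p=\infty$ is immediate with $p'=q_0$). Thus $f\in L^{p'}(X,\omega_H^n)$, and Theorem \ref{thm CMA semipositive} gives $\varphi\in C^0(X)$. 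I expect the main obstacle to be exactly this middle step: one must circumvent the degeneracy of $\omega_H$ along $\textup{Exc}(F)$, where $f$ is only defined $\omega_H^n$-almost everywhere, by pushing the measures to $Y$, and then combine the prescribed $L^p(\mu)$ bound with the log terminal $L^{q_0}$ bound so that the resulting exponent $p'$ is strictly larger than $1$; securing the integrability $\Psi_Y\in L^{q_0}$ under the pushforward is the key analytic point.
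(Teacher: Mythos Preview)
Your strategy is correct and coincides with the paper's: both reduce to Theorem \ref{thm CMA semipositive} after showing, via a H\"older interpolation combining the $L^p(\mu)$-bound with the $L^{1+\varepsilon}$-integrability coming from log terminality, that the Monge--Amp\`ere density lies in some $L^{p'}$ with $p'>1$. The H\"older step you write down is essentially identical to the paper's.

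The only substantive difference is \emph{where} the interpolation is performed. The paper stays on $X$: it fixes an auxiliary K\"ahler form $\omega$ on $X$, writes $\mu=g\,\omega^n$ with $g\in L^q(X,\omega^n)$ (this is exactly log terminality of $X$), and shows $fg\in L^r(X,\omega^n)$ for some $r>1$ before invoking Theorem \ref{thm CMA semipositive}. You instead push everything to $Y$ first and then interpolate against $\omega_Y^n$, which forces you to establish $\Psi_Y=d(F_\ast\mu)/d\omega_Y^n\in L^{q_0}(Y,\omega_Y^n)$. That claim is \emph{not} simply ``log terminality of $X$ transported across $F$'': since $\omega_H^n$ degenerates along $\mathrm{Exc}(F)$, the ratio $d\mu/d\omega_H^n$ can blow up there, and controlling $\Psi_Y$ requires an additional ingredient---essentially that $F_\ast$ of a K\"ahler volume on $X$ has $L^{1+\varepsilon}$-density against $\omega_Y^n$ (as in Lemma~3.2 of \cite{EGZ09}, after composing with a resolution). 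You flag this as ``the key analytic point,'' which is accurate, but the paper's route sidesteps it entirely by never leaving $X$ for the H\"older step; the passage to $Y$ is then absorbed wholesale into the citation of Theorem \ref{thm CMA semipositive}. What your route buys is a cleaner match with the hypotheses of Theorem \ref{thm CMA semipositive} as literally stated (density in $L^{p'}(\omega_H^n)$), whereas the paper's application of that theorem with density taken against an auxiliary K\"ahler $\omega^n$ on $X$ is a mild abuse requiring the same pushforward estimate implicitly.
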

\begin{proof}
	Let $\omega$ be a K\"{a}hler form on $X$ and $\varphi\in \textup{PSH}_p(X,\omega_H,\mu)$. By the assumption, there exists a function $f\in L^p(X,\mu)$ such that $(\omega_H+dd^c\varphi)^n=f\mu$ on $X$. Since $X$ has log terminal singularities, there also exist a number $q>1$ and $g\in L^{q}(X,\omega^n)$ such that $\mu=g\omega^n$ on $X$. Assume without loss of generality that $p=q$. So we have
	\begin{equation}\label{integrals}
		\int_{X}f^pd\mu=\int_{X}f^pg\omega^n<+\infty,~\text{and} \int_{X}g^p\omega^n<+\infty.
	\end{equation}
	Note that, if
	\begin{equation*}
		\frac{(\omega_H+dd^c\varphi)^n}{\omega^n}=fg\in L^{r}(X,\omega^n)
	\end{equation*}
	holds for some $r>1$, then the desired conclusion follows from Theorem \ref{thm CMA semipositive}. 
	
	Let $r>1$, $p'\in (1,p)$, and denote by $q'$ the H\"{o}lder conjugate of $p'$. Then we use the H\"{o}lder inequality to obtain 
	\begin{align*}
		0\leq \int_{X}(fg)^{r}\omega^n&=\int_{X}f^{r}g^{\frac{r}{p}}\cdot g^{\big(1-\frac{1}{p}\big)r}\omega^n\\
		&\leq \big\|f^{r}g^{\frac{r}{p}}\big\|_{L^{p'}(\omega^n)}\cdot \big\|g^{\big(1-\frac{1}{p}\big)r}\big\|_{L^{q'}(\omega^n)}\\
		&\leq \bigg[\int_{X}(f^{p}g)^{\frac{p'r}{p}}\omega^n\bigg]^{\frac{1}{p'}}\cdot \bigg[\int_{X}(g)^{\frac{p-1}{p'-1}\frac{p'r}{p}}\omega^n\bigg]^{\frac{1}{q'}}.
	\end{align*}
	Since $L^{p_1}(X,\omega^n)\subset L^{p_2}(X,\omega^n)$ whenever $p_1>p_2$, the estimate above together with (\ref{integrals}) implies that $fg\in L^{r}(X,\omega^n)$ if
	\[
	1+\frac{p-1}{p}<p'<p, ~1<r<\frac{p}{p'}
	\]
	holds.
\end{proof}

Let $X$ be a $\mathbb{Q}$-factorial compact projective variety with log terminal singularities and $\hat{\omega}_0$ a K\"{a}hler form on $X$ such that
	\[
	T_{m}:=\sup\{t>0:e^{-t}[\hat{\omega}_0]-(1-e^{-t})c_1(X)\in \mathcal{K}\}>0.
	\]
	Fix $\varphi_0\in \textup{PSH}(\hat{\omega}_0)\cap L^{\infty}(X)$. A family $\{\omega(t):t\in [0,T_m)\}$ of semi-K\"{a}hler currents on $X$ is said to be a solution to the $\textit{normalized weak K\"{a}hler-Ricci flow}$ starting from $\omega_0:=\hat{\omega}_0+dd^c\varphi_0$ if the family restricts to a smooth path of K\"{a}hler forms on $X^{\textup{reg}}$ satisfying 
	\begin{equation}\label{eqn NKRF}
		\begin{dcases}
			\frac{\partial \omega}{\partial t}=-\textup{Ric}(\omega)-\omega~~\text{on}~~(0,T_m)\times X^{\textup{reg}},\\[5pt]
			\omega|_{t=0}=\omega_0 ~~\text{on}~~X.
		\end{dcases}
	\end{equation}

If $X$ is a minimal variety of general type, then $mK_X$ is semiample for some integer $m\geq 1$ by \cite{Ka84}. So the weak K\"{a}hler-Ricci flow (\ref{eqn KRF}) with $T_{\textup{max}}=+\infty$ on $X$ admits a unique solution $\tilde{\omega}$ and the potential of the solution is continuous on $(0,+\infty)\times X$ by Theorem \ref{thm KRF potential conti}. Then the family of semi-K\"{a}hler currents on $X$ defined by
\[
\omega(t):=e^{-t}{\tilde{\omega}(e^t-1)}
\]
becomes a unique solution to (\ref{eqn NKRF}) with $T_m=+\infty$. By Proposition 5.4 in \cite{ST17}, the potential $\varphi(t,\cdot)$ of the solution to the normalized flow converges to the singular K\"{a}hler-Einstein potential in $L^1(X)\cap C^{\infty}(X^{\circ})$ as $t\to +\infty$. Therefore, we obtain the following
\begin{theorem}
	If $X$ is a compact minimal projective variety of general type with log terminal singularities, then the potential $\varphi:[0,+\infty)\times X\to \mathbb{R}$ of the unique solution to the normalized weak K\"{a}hler-Ricci flow on $X$ converging to the singular K\"{a}hler-Einstein potential in $L^1(X)\cap C^{\infty}(X^{\circ})$ is continuous on $(0,+\infty)\times X$. If $\varphi_0$ is continuous on $X$, then $\varphi$ is continuous on $[0,+\infty)\times X$.
\end{theorem}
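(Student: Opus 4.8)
The plan is to deduce the statement from the continuity already established for the unnormalized weak K\"ahler-Ricci flow, Theorem~\ref{thm KRF potential conti}, by the time change that turns the normalized flow \eqref{eqn NKRF} into the flow \eqref{eqn KRF}. First I would record that, since $X$ is minimal of general type with log terminal singularities, $mK_X$ is semiample for some $m\geq 1$ by \cite{Ka84}; in particular $c_1(K_X)=-c_1(X)$ is nef. Hence $[\hat{\omega}_0]-s\,c_1(X)=[\hat{\omega}_0]+s\,c_1(K_X)$ is a K\"ahler class for every finite $s\geq 0$, being the sum of a K\"ahler class and a nonnegative multiple of a nef class, so that $T_{\textup{max}}=+\infty$. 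By the construction recalled in Section~\ref{sect. KRF} (the a priori estimates of \cite{GLZ20} together with Remark~\ref{remark smoothness of WKRF}), the flow \eqref{eqn KRF} with $T_{\textup{max}}=+\infty$ admits a unique solution $\tilde{\omega}$, whose potential $\tilde{\varphi}$ is continuous on $(0,+\infty)\times X$ by Theorem~\ref{thm KRF potential conti}, and on $[0,+\infty)\times X$ whenever $\varphi_0\in C^0(X)$.

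Next I would set $s=e^t-1$ and study the rescaled family $\omega(t):=e^{-t}\tilde{\omega}(e^t-1)$. Differentiating and using $\partial_t s=e^t$ one checks directly that $\omega(t)$ solves \eqref{eqn NKRF}, and the time change maps $[0,+\infty)$ onto $[0,+\infty)$, so the maximal existence time is $T_m=+\infty$. The rescaling is consistent at the level of cohomology: $e^{-t}\big([\hat{\omega}_0]+(e^t-1)c_1(K_X)\big)=e^{-t}[\hat{\omega}_0]+(1-e^{-t})c_1(K_X)$, which is exactly the base class of the normalized flow at time $t$. Consequently the normalized base semi-K\"ahler form $\hat{\omega}_t'$ and the rescaled form $e^{-t}\hat{\omega}_{e^t-1}$ are cohomologous, so by the exact sequence defining $H^1(X,\mathcal{PH}_X)$ their difference is $dd^c$ of a function that is smooth on all of $X$ (two semi-K\"ahler forms lying in the same class in $H^1(X,\mathcal{PH}_X)$ differ by $dd^c$ of a global smooth function). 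Comparing the two descriptions of $\omega(t)$ at the level of potentials and absorbing the smooth-in-$t$ normalizing constant then yields
\[
\varphi(t,z)=e^{-t}\tilde{\varphi}(e^t-1,z)+\eta(t,z),
\]
where $\eta$ is smooth on $[0,+\infty)\times X$ and $\eta(0,\cdot)=0$.

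Since $t\mapsto e^t-1$ is a homeomorphism of $(0,+\infty)$ onto itself, and of $[0,+\infty)$ onto itself fixing the origin, composition with the continuous factor $e^{-t}$ and addition of the smooth term $\eta$ preserve continuity. Thus the continuity of $\tilde{\varphi}$ transfers to $\varphi$: the potential $\varphi$ is continuous on $(0,+\infty)\times X$, and continuous on $[0,+\infty)\times X$ when $\varphi_0$ is continuous. Finally, that this $\omega(t)$ is precisely the solution whose potential converges to the singular K\"ahler-Einstein potential in $L^1(X)\cap C^{\infty}(X^{\circ})$ is Proposition~5.4 in \cite{ST17}, which identifies the asserted solution and completes the argument.

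The essential content sits upstream: the whole argument rests on Theorem~\ref{thm KRF potential conti}, and hence ultimately on Theorem~\ref{main theorem} applied fiberwise to the elliptic Monge-Amp\`ere inequality \eqref{ineq. supersol.} together with the Lipschitz-in-time control encoded in \eqref{aux. function} from \cite{GLZ20}. Granting that theorem, the only genuinely new points are the identification $T_{\textup{max}}=+\infty$ from semiampleness of $mK_X$ and the bookkeeping of the rescaling. The step I expect to require the most care is verifying that the correction term $\eta$ is smooth across $X^{\textup{sing}}$ and vanishes correctly at $t=0$; this is where one must invoke that the two base forms agree as classes in $H^1(X,\mathcal{PH}_X)$, rather than merely on $X^{\textup{reg}}$, so that $\eta$ extends smoothly through the singular locus and the transfer of continuity is valid on all of $X$.
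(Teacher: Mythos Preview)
Your proposal is correct and follows essentially the same route as the paper: reduce to the unnormalized flow via Theorem~\ref{thm KRF potential conti} using that $mK_X$ is semiample (hence $T_{\textup{max}}=+\infty$), then apply the time change $\omega(t)=e^{-t}\tilde{\omega}(e^t-1)$ and invoke Proposition~5.4 of \cite{ST17}. Your explicit handling of the correction term $\eta$ via the cohomology class in $H^1(X,\mathcal{PH}_X)$ spells out a bookkeeping step the paper leaves implicit, but the argument is otherwise the same.
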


\end{document}